\documentclass{amsart}
\usepackage{latexsym,amsmath,amssymb}
\newtheorem{thm}{Theorem}[section]
\newtheorem{cor}[thm]{Corollary}

\newtheorem{lem}[thm]{Lemma}
\newtheorem{prop}[thm]{Proposition}
\theoremstyle{definition}\newtheorem{defn}[thm]{Definition}
\theoremstyle{remark}

\numberwithin{equation}{section}

\begin{document}

\title[]
{On the Deddens algebras of a class of bounded operators
}

\author{\sc\bf  Z. Huang, Y. Estaremi and S. Shimi}
\address{ \sc Z. Huang}
\email{jameszhuang923@gmail.com}
\address{Huxley Building Department of Mathematics, South Kensington Campus,Imperial College London, London, UK}
\address{\sc Y. Estaremi}
\email{y.estaremi@gu.ac.ir}
\address{Department of Mathematics, Faculty of Sciences, Golestan University, Gorgan, Iran.}
\address{\sc S. Shimi}
\email{S.shimi@gnu.ac.ir}
\address{Department of Mathematics, Gu university, Iran.}

\thanks{}

\thanks{}

\subjclass[2020]{47A15, 47A65, 47L30}

\keywords{Deddens algebras, Spectral radius algebras, Majorization, Weighted conditional type operators.}

\date{}

\dedicatory{}

\commby{}

\begin{abstract}
In this paper, we investigate the relation between the Deddens and spectral radius algebras of two bounded linear operators, noting a similarity between them. Additionally, we characterize the Deddens and spectral radius algebras related to rank one operators, operators that are similar to rank one operators, operators that are majorized by rank one operators, and quasi-isometry operators. Furthermore, we apply these results to the class of weighted conditional type operators on the Hilbert space $L^2(\mu)$.

\end{abstract}

\maketitle

\section{ \sc\bf Introduction and Preliminaries}
Let $\mathcal{H}$ be a complex Hilbert spaces, $\mathcal{B}(\mathcal{H})$ be the Banach algebra of all bounded linear operators on $\mathcal{H}$, where $I=I_{\mathcal{H}}$ is the identity operator on $\mathcal{H}$. If $T\in \mathcal{B}(\mathcal{H})$, then $T^*$ is the adjoint of $T$.

Let $\mathcal{C}$ be a class of operators on the Hilbert space $\mathcal{H}$, and let $T\in \mathcal{B}(\mathcal{H})$. We say that $T$ is similar to an element of $\mathcal{C}$ if there exists $C\in \mathcal{C}\cap \mathcal{B}(\mathcal{H})$ and an invertible operator $A\in \mathcal{B}(\mathcal{H})$ such that $AT=CA$. In this case, we say that $A$ is a similarity between $T$ and $C$, or $T$ is similar to $C$ by $A$. Since $A$ is invertible, we have $T=A^{-1}CA$ and $C=ATA^{-1}$.

Recall that a bounded linear operator $T$ is called quasi-normal if $T$ commutes with $T^*T$, i.e., $TT^*T=T^*TT$.

Let $A$ and $T$ be operators in $\mathcal{B}(\mathcal{H})$, where $A$ is a non-zero positive operator. The operator $T$ is called an $A$-isometry if $T^*AT=A$
. It is easy to see that if $T$ is an $A$-isometry, then $T^n$ is also an $A$-isometry for every $n\in \mathbb{N}$. In order to \cite{cs}, for $n\in \mathbb{N}$, we say that $T$ is an $n$-quasi-isometry if $T$ is a $T^{*^n}T^n$-isometry. Hence, $T$ is an $n$-quasi-isometry if and only if $T$ is an isometry on $\mathcal{R}(T^n)$. Moreover, $T$ is called a quasi-isometry if it is a 1-quasi-isometry.

If $A$ is an invertible operator in $\mathcal{B}(\mathcal{H})$, then the collection

$$\{T\in \mathcal{B}(\mathcal{H}): \sup_{n\in \mathbb{N}}\|A^nTA^{-n}\|<\infty\}$$
is called the Deddens algebra of $A$ and denoted by $\mathcal{D}_A$. It is easy to see that
 $S\in \mathcal{D}_T$ if and only if there exists $M>0$ such that
$$\|T^nSx\|\leq M\|T^nx\|, \ \ \forall n\in \mathbb{N}, \ \ x\in \mathcal{H}.$$

Let $T\in \mathcal{B}(\mathcal{H})$ and $r(T)$ be the spectral radius of $T$. For $m\geq1$, we define
\begin{equation}\label{e1}
R_m(T)=R_m:=\left(\sum^{\infty}_{n=0}d^{2n}_mT^{\ast ^n}T^n\right)^{\frac{1}{2}},
\end{equation}
where $d_m=\frac{1}{1/m+r(T)}$. Since $d_m\uparrow 1/r(T)$, the sum in \ref{e1} is norm convergent and the operators $R_m$ are well-defined, positive, and invertible. The spectral radius algebra $\mathcal{B}_T$ of $T$ consists of all operators $S\in \mathcal{B}(\mathcal{H})$ such that
$$\sup_{m\in \mathbb{N}}\|R_mSR^{-1}_m\|<\infty,$$
or equivalently, $S\in \mathcal{B}_T$ if and only if there exists $M>0$ such that
$$\sum^{\infty}_{n=0}d^{2n}_m\|T^nSx\|\leq M\sum^{\infty}_{n=0}d^{2n}_m\|T^nx\|, \forall m\in \mathbb{N}, \ \ \forall x\in \mathcal{H}.$$

The set $\mathcal{B}_T$ is an algebra and it contains all operators that commute with $T$ ($\{T\}'$). By the above definitions, for each $T\in \mathcal{B}(\mathcal{H})$, we have
$$\{T\}'\subseteq \mathcal{D}_T\subseteq \mathcal{B}_T.$$

The spectral radius and Deddens algebras help us to find invariant and hyperinvariant subspaces for a bounded linear operator. Many mathematicians have investigated the problem of finding invariant subspaces for special classes of bounded linear operators by studying the invariant subspaces of spectral radius and Deddens algebras. The latest results on Deddens algebras can be found in \cite{jm}. For more information one can see \cite{blpw,ej1,ja,ml,lp,pe,si}. In this paper, we are concerned with the Deddens and spectral radius algebras of some classes of bounded linear operators on Hilbert spaces. In Section 2, we first find the relation between Deddens and spectral radius algebras of two bounded linear operators that have a similarity between them. We also investigate the Deddens and spectral radius algebras of rank one operators and operators that are majorized by a rank one operator. In the sequel, we obtain the Deddens and spectral radius algebras of quasi-isometry operators. In Section 3, we apply the results of Section 2 to the class of WCT operators on the Hilbert space $L^2(\mu)$.

\section{ \sc\bf Deddens and spectral radius algebras of similar operators}
In this section we first investigate the relation between the Deddens and spectral radius algebras of two bounded linear operators that are similar through an invertible operator.

\begin{prop}\label{p0}
Let $T, A, C\in\mathcal{B}(\mathcal{H})$ and $A$ be invertible such that $T$ is similar to $C$ by $A$. Then $T^n$ is similar to $C^n$ by $A$, for every $n\in \mathbb{N}$ and $A\mathcal{D}_T=\mathcal{D}_CA$ (or equivalently $A\mathcal{D}_TA^{-1}=\mathcal{D}_C$, $\mathcal{D}_T=A^{-1}\mathcal{D}_CA$). Also, $A\mathcal{B}_T=\mathcal{B}_{C}A$ (or equivalently $A\mathcal{B}_TA^{-1}=\mathcal{B}_C$, $\mathcal{B}_T=A^{-1}\mathcal{B}_CA$).
\end{prop}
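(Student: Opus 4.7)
The statement that $T^n$ is similar to $C^n$ by $A$ I would establish by a straightforward induction on $n$: the base case is the hypothesis $AT=CA$, and assuming $AT^{n-1}=C^{n-1}A$ one has $AT^n=(AT)T^{n-1}=C(AT^{n-1})=C^n A$, so $T^n=A^{-1}C^n A$ for every $n\in\mathbb{N}$.

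For the Deddense identity my plan is to use the equivalent pointwise characterization recalled in the introduction: $S\in\mathcal{D}_T$ iff there is a constant $M>0$ with $\|T^n Sx\|\leq M\|T^n x\|$ for all $n$ and $x$. Given such $S$ and setting $S':=ASA^{-1}$, the intertwining $C^n A=AT^n$ (equivalently $T^n A^{-1}=A^{-1}C^n$) from the first step gives $C^n S'=AT^n SA^{-1}$, and a direct chain of estimates
\[
\|C^n S'y\|\leq \|A\|\,\|T^n S(A^{-1}y)\|\leq \|A\|M\,\|T^n(A^{-1}y)\|=\|A\|M\,\|A^{-1}C^n y\|\leq \|A\|\|A^{-1}\|M\|C^n y\|
\]
shows $S'\in\mathcal{D}_C$, i.e.\ $A\mathcal{D}_T A^{-1}\subseteq \mathcal{D}_C$. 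The reverse inclusion follows by applying exactly the same argument with the roles of $(T,C,A)$ replaced by $(C,T,A^{-1})$, using $A^{-1}C=TA^{-1}$.

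For the spectral radius algebras the first step is the observation that $r(T)=r(C)$: the identity $C^n=AT^n A^{-1}$ together with its symmetric counterpart $T^n=A^{-1}C^n A$ forces $\|C^n\|^{1/n}$ and $\|T^n\|^{1/n}$ to have the same limit, since the factor $(\|A\|\|A^{-1}\|)^{1/n}$ tends to $1$. This matters because it guarantees that the coefficients $d_m$ appearing in the definitions of $R_m(T)$ and $R_m(C)$ coincide, so that the two algebras are described by weighted series with identical weights. Applying the series characterization of $\mathcal{B}_T$ term by term, the same insertion of $A$ and $A^{-1}$ used in the Deddense argument shows that if $S\in\mathcal{B}_T$ with constant $M$, then $ASA^{-1}\in\mathcal{B}_C$ with constant $\|A\|\|A^{-1}\|M$, and conversely.

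The only step that requires any thought is the verification $r(T)=r(C)$; without it the weights $d_m$ in the two definitions of $R_m$ would differ and the series comparison could not be performed directly. Everything else is mechanical: conjugation by $A$ passes through each norm at the cost of the harmless factors $\|A\|$ and $\|A^{-1}\|$, which do not affect membership in either algebra.
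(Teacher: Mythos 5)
Your proposal is correct and follows essentially the same route as the paper: the same induction gives $AT^n=C^nA$, and the same insertion of $A$ and $A^{-1}$ into the pointwise (resp.\ series) characterizations yields both inclusions at the cost of the factor $\|A\|\|A^{-1}\|$. The one point where you go beyond the paper is your explicit verification that $r(T)=r(C)$, which is genuinely needed so that the weights $d_m$ in the two series characterizations coincide; the paper uses the same $d_m$ for both operators without comment, so your remark fills a detail it leaves implicit.
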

\begin{proof}
Let $S\in \mathcal{D}_T$, then there exists $M>0$ such that
$$ \|T^nSx\|\leq M\|T^nx\|, \ \ \forall n\in \mathbb{N}, \ \ x\in \mathcal{H}.$$
So we have
\begin{align*}
\|C^nASA^{-1}x\|&=\|AT^nSA^{-1}x\|\\
&\leq \|A\|\|T^nSA^{-1}x\|\\
&M\|A\|\|T^nA^{-1}x\|\\
&M\|A\|\|A^{-1}C^nx\|\\
&M\|A\|\|A^{-1}\|\|C^nx\|,
\end{align*}
for all $n\in \mathbb{N}, \ \ x\in \mathcal{H}$.
Thus we have $ASA^{-1}\in \mathcal{D}_C$ and consequently $A\mathcal{D}_TA^{-1}\subseteq\mathcal{D}_C$. Similarly we get the converse i.e., $\mathcal{D}_C\subseteq A\mathcal{D}_TA^{-1}$ and so $A\mathcal{D}_TA^{-1}=\mathcal{D}_C$.\\
Let $S\in \mathcal{B}_T$, then
there exists $M>0$ such that
$$\sum^{\infty}_{n=0}d^{2n}_m\|T^nSx\|\leq M\sum^{\infty}_{n=0}d^{2n}_m\|T^nx\|, \forall m\in \mathbb{N}, \ \ \forall x\in \mathcal{H}.$$
Hence we have
\begin{align*}
\sum^{\infty}_{n=0}d^{2n}_m\|C^nASA^{-1}x\|&=\sum^{\infty}_{n=0}d^{2n}_m\|AT^nSA^{-1}x\|\\
&\leq\|A\|\sum^{\infty}_{n=0}d^{2n}_m\|T^nSA^{-1}x\|\\
&\leq M\|A\|\sum^{\infty}_{n=0}d^{2n}_m\|T^nA^{-1}x\|\\
&= M\|A\|\sum^{\infty}_{n=0}d^{2n}_m\|A^{-1}C^nx\|\\
&\leq M\|A\|\|A^{-1}\|\sum^{\infty}_{n=0}d^{2n}_m\|C^nx\|,
\end{align*}
for all $m\in \mathbb{N}$ and for all $x\in \mathcal{H}$. This means that $ASA^{-1}\in \mathcal{B}_C$ and so $A\mathcal{B}_TA^{-1}\subseteq\mathcal{B}_C$. Similarly one can prove the converse. Therefore $A\mathcal{B}_TA^{-1}=\mathcal{B}_C$.

\end{proof}
Let $X, Y, Z$ be Banach spaces and $\mathcal{B}(X,Y)$ be the Banach space of all bounded linear operators from $X$ into $Y$. Also, $\mathcal{R}(T)$, $\mathcal{N}(T)$ are the range and the kernel of $T$, respectively. If $T\in \mathcal{B}(X,Y)$ and $S\in \mathcal{B}(X,Z)$, then we say that $T$ majorizes $S$ if there exists $M>0$ such that
$$\|Sx\|\leq M\|Tx\|, \ \ \ \ \ \text{for all} \ x\in X.$$
The following characterization are known in the case of Hilbert spaces.
\begin{thm}\cite{do}\label{t0} For  $T, S\in \mathcal{B}(\mathcal{H})$, the following conditions are equivalent:\\
(1) $\mathcal{R}(S)\subseteq \mathcal{R}(T)$;\\
(2) $T^*$ majorizes $S^*$;\\
(3) $S=TU$ for some $U\in \mathcal{B}(\mathcal{H})$.
\end{thm}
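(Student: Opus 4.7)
The plan is to prove the two equivalences $(1) \Leftrightarrow (3)$ and $(2) \Leftrightarrow (3)$, which together yield the theorem. In both cases the forward direction from $(3)$ is painless: $\mathcal{R}(TU) \subseteq \mathcal{R}(T)$ gives $(3) \Rightarrow (1)$ directly, and taking adjoints in $S = TU$ yields $S^{*} = U^{*}T^{*}$, whence $\|S^{*}x\| \leq \|U\|\,\|T^{*}x\|$, which is $(3) \Rightarrow (2)$. The substance is therefore in producing, from either $(1)$ or $(2)$, a bounded operator $U$ with $S = TU$.

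For $(1) \Rightarrow (3)$ I would construct $U$ pointwise using the orthogonal decomposition $\mathcal{H} = \mathcal{N}(T) \oplus \mathcal{N}(T)^{\perp}$. For each $x \in \mathcal{H}$ the inclusion $\mathcal{R}(S) \subseteq \mathcal{R}(T)$ supplies a $T$-preimage of $Sx$, and I select the unique one lying in $\mathcal{N}(T)^{\perp}$, calling it $Ux$. Linearity of $U$ is forced by uniqueness. Boundedness I expect to obtain from the closed graph theorem: if $x_n \to x$ and $Ux_n \to z$, then $Sx_n = TUx_n \to Tz$ and simultaneously $Sx_n \to Sx$, so $Tz = Sx$; meanwhile $z$ lies in $\mathcal{N}(T)^{\perp}$ because that subspace is closed, and uniqueness then forces $z = Ux$.

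For $(2) \Rightarrow (3)$ I would dualize the construction. Define $V_0 : \mathcal{R}(T^{*}) \to \mathcal{H}$ by the prescription $V_0(T^{*}x) := S^{*}x$; the majorization $\|S^{*}x\| \leq M\|T^{*}x\|$ delivers in one stroke both well-definedness (if $T^{*}x = 0$ then $S^{*}x = 0$) and the norm bound $\|V_0\| \leq M$. Extending $V_0$ by continuity to $\overline{\mathcal{R}(T^{*})}$ and by zero on the orthogonal complement $\mathcal{N}(T) = \overline{\mathcal{R}(T^{*})}^{\perp}$ produces $V \in \mathcal{B}(\mathcal{H})$ satisfying $VT^{*} = S^{*}$. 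Setting $U := V^{*}$ and taking adjoints then gives $TU = (VT^{*})^{*} = S$.

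The main obstacle, I expect, is recognizing the symmetry: in $(1) \Rightarrow (3)$ the Hilbert space structure is exploited to pick a canonical preimage of $Sx$, whereas in $(2) \Rightarrow (3)$ it is exploited on the dual side to define $V$ on $\mathcal{R}(T^{*})$ and extend by zero on its orthogonal complement. Both arguments reduce, once this is seen, to a routine appeal to either the closed graph theorem or continuous extension from a dense subspace.
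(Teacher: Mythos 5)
Your proof is correct. The paper does not prove this statement itself --- it is the classical Douglas factorization theorem, cited from \cite{do} --- and your argument (trivial implications out of $(3)$; $(1)\Rightarrow(3)$ by choosing the unique preimage in $\mathcal{N}(T)^{\perp}$ and invoking the closed graph theorem; $(2)\Rightarrow(3)$ by defining $V_0(T^{*}x)=S^{*}x$ on $\mathcal{R}(T^{*})$, extending by continuity and by zero on $\mathcal{N}(T)$, and taking adjoints) is precisely the standard proof from Douglas's original paper, with all the well-definedness and closedness points handled correctly.
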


For $x, y \in \mathcal{H}$, we have $x\otimes y \in \mathcal{B}(\mathcal{H})$, and $\|x\otimes y\| = \|x\|\|y\|$. Here, $(x\otimes y)h = \langle h,y\rangle_{\mathcal{H}} x$ for every $h\in \mathcal{H}$. It is known that all rank one operators are of the form $x\otimes y$, and as a result, they generate finite rank operators on $\mathcal{H}$.

In the following theorem, we determine the elements of the dense algebras of rank one operators on Hilbert spaces. Additionally, we show that $\mathcal{D}_{(x\otimes y)^n} = \mathcal{D}_{x\otimes y}$ for every $n\in \mathbb{N}$.

\begin{thm}\label{t2.1}
 Let $x,y\in \mathcal{H}$ and $T\in \mathcal{B}(\mathcal{H})$. Then $T\in \mathcal{D}_{x\otimes y}$ if and only if $x\otimes y$  majorizes $(x\otimes T^*y)$ if and only if there exists $M>0$ such that $|\langle z, T^*y\rangle|\leq M |\langle z, y\rangle|$, for all $z\in \mathcal{H}$. Moreover, $\mathcal{D}_{(x\otimes y)^n}=\mathcal{D}_{x\otimes y}$, for every $n\in \mathbb{N}$.
\end{thm}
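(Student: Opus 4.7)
The plan is to exploit the rank one structure of $x\otimes y$: all of its powers are scalar multiples of itself, so the infinite family of Deddense inequalities should collapse to a single linear-functional inequality on $\mathcal{H}$. A one-line induction from $(x\otimes y)h=\langle h,y\rangle x$ yields
\[
(x\otimes y)^n h=\langle x,y\rangle^{n-1}\langle h,y\rangle\,x,\qquad n\ge 1,
\]
and hence $\|(x\otimes y)^n h\|=|\langle x,y\rangle|^{n-1}|\langle h,y\rangle|\,\|x\|$.

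Substituting $Th$ for $h$ and using $\langle Th,y\rangle=\langle h,T^*y\rangle$, the defining Deddense inequality $\|(x\otimes y)^n Th\|\le M\|(x\otimes y)^n h\|$ takes the form
\[
|\langle x,y\rangle|^{n-1}|\langle h,T^*y\rangle|\,\|x\|\le M\,|\langle x,y\rangle|^{n-1}|\langle h,y\rangle|\,\|x\|
\]
for every $n\ge 1$ and every $h\in\mathcal{H}$. In the principal case $x\ne 0$ and $\langle x,y\rangle\ne 0$, the common factor $|\langle x,y\rangle|^{n-1}\|x\|$ cancels uniformly in $n$, and the whole family collapses to the single inequality $|\langle z,T^*y\rangle|\le M|\langle z,y\rangle|$ for all $z\in\mathcal{H}$, which is the third listed characterization. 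The equivalence with the majorization statement is then immediate from $\|(x\otimes T^*y)h\|=\|x\|\,|\langle h,T^*y\rangle|$ and $\|(x\otimes y)h\|=\|x\|\,|\langle h,y\rangle|$, which make ``$x\otimes y$ majorizes $x\otimes T^*y$'' literally the same inequality after dividing by $\|x\|$.

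For the \emph{moreover} part, the reduction formula $(x\otimes y)^n=\langle x,y\rangle^{n-1}(x\otimes y)$ presents $(x\otimes y)^n$ as a nonzero scalar multiple of $x\otimes y$ whenever $\langle x,y\rangle\ne 0$; the scalar cancels on both sides of the Deddense inequality, and the equality $\mathcal{D}_{(x\otimes y)^n}=\mathcal{D}_{x\otimes y}$ follows at once from the fact that the characterization $|\langle z,T^*y\rangle|\le M|\langle z,y\rangle|$ is visibly independent of $n$. The step I expect to require the most careful bookkeeping is the degenerate case $\langle x,y\rangle=0$, where $x\otimes y$ is nilpotent of order two and $(x\otimes y)^n=0$ for $n\ge 2$; here the Deddense condition for $x\otimes y$ is already determined by the $n=1$ step, and one must either restrict the scope of the moreover claim to the nondegenerate case or handle the nilpotent case separately. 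Once that bookkeeping is fixed, the proof is essentially complete.
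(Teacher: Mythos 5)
Your proof is correct and follows essentially the same route as the paper: both use the identity $(x\otimes y)^n=\langle x,y\rangle^{n-1}(x\otimes y)$ to collapse the family of Deddense inequalities to the single condition $|\langle z,T^*y\rangle|\le M|\langle z,y\rangle|$ and then read off the majorization statement from $\|(x\otimes y)h\|=\|x\|\,|\langle h,y\rangle|$. Your caveat about the degenerate case $\langle x,y\rangle=0$ is well taken and in fact sharper than the paper's treatment: there $(x\otimes y)^n=0$ for $n\ge 2$, so $\mathcal{D}_{(x\otimes y)^n}=\mathcal{B}(\mathcal{H})$ while $\mathcal{D}_{x\otimes y}$ need not be, and the ``moreover'' claim genuinely requires the restriction to $\langle x,y\rangle\neq 0$ that you propose.
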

\begin{proof}
 If $x, y \in \mathcal{H}$ such that $\langle x, y\rangle\neq 0$, then $(x\otimes y)^n=(\langle x,y\rangle)^{n-1} (x\otimes y)$. And so, if $\langle x, y\rangle=0$, then $(x\otimes y)^2=0$ and so it is nilpotent. Hence for the case $\langle x, y\rangle=0$, we have
$$\mathcal{D}_{x\otimes y}=\{T\in \mathcal{B}(\mathcal{H}):\exists M>0, \|(x\otimes y)Tz\|\leq M \|(x\otimes y)z\| \}.$$
And for the case $\langle x, y\rangle\neq 0$,
\begin{align*}
\mathcal{D}_{x\otimes y}&=\{T\in \mathcal{B}(\mathcal{H}):\exists M>0,  \|(x\otimes y)^nTz\|\leq M \|(x\otimes y)^nz\|, \ \ \forall  n\in \mathbb{N}\}\\
&=\{T\in \mathcal{B}(\mathcal{H}):\exists M>0, |\langle x, y\rangle |^{n-1} \|(x\otimes y)Tz\|\leq M |\langle x, y\rangle |^{n-1} \|(x\otimes y)z\|, \ \ \forall  n\in \mathbb{N}\}\\
&=\{T\in \mathcal{B}(\mathcal{H}):\exists M>0, \|(x\otimes y)Tz\|\leq M \|(x\otimes y)z\| \}\\
&=\{T\in \mathcal{B}(\mathcal{H}):\exists M>0,  |\langle Tz, y\rangle|\|x\|\leq M |\langle z, y\rangle|\|x\| \}\\
&=\{T\in \mathcal{B}(\mathcal{H}):\exists M>0,  |\langle z, T^*y\rangle|\leq M |\langle z, y\rangle|\}.\\
\end{align*}

So $T\in \mathcal{D}_{x\otimes y}$ if and only if $(x\otimes y)$ majorizes $(x\otimes T^*y)$ if and only if there exists $M>0$ such that $|\langle z, T^*y\rangle|\leq M |\langle z, y\rangle|$, for all $z\in \mathcal{H}$. By these observations and the fact that $(x\otimes y)^n=(\langle x,y\rangle)^{n-1} (x\otimes y)$, we get that $\mathcal{D}_{(x\otimes y)^n}=\mathcal{D}_{x\otimes y}$, for every $n\in \mathbb{N}$.
\end{proof}

By these observations we get that the Deddens algebra of $x\otimes y$, $\mathcal{D}_{x\otimes y}$, is independent of $x$. This implies that for all $x,y \in \mathcal{H}$ and all $S\in \mathcal{B}(\mathcal{H})$ with $\langle Sx, y\rangle \neq 0$, we have $\mathcal{D}_{x\otimes y}=\mathcal{D}_{Sx\otimes y}$. More generally, for all $x\in \mathcal {H}$ such that $x, z\notin \{y\}^{\perp}$, we have $\mathcal{D}_{x\otimes y}=\mathcal{D}_{z\otimes y}$.\\
From Theorem 2.8 of \cite{lp}, for unit vectors $x,y\in \mathcal{H}$,
\begin{equation}\label{e1}
\mathcal{B}_{x\otimes y}=\{T\in \mathcal{B}(\mathcal{H}): y \ \text{is an eigenvector for} \ T^*\}.
\end{equation}
In the following proposition we aim to characterize elements of Deddens and spectral radius algebras of operators that are similar to rank one operators.
\begin{prop}\label{p2.2}
Let $T, A\in\mathcal{B}(\mathcal{H})$, $x,y\in \mathcal{H}$ and $A$ be invertible such that $T$ is similar to $x\otimes y$ by $A$. Then $S\in \mathcal{D}_{T}$ if and only if $(A^{-1}x\otimes A^*y)$ majorizes $(A^{-1}x\otimes S^*A^*y)$ if and only if there exists $M>0$ such that $|\langle z, S^*A^*y\rangle|\leq M |\langle z, A^*y\rangle|$, for all $z\in \mathcal{H}$. Moreover,
$$\mathcal{B}_{T}=\{S\in \mathcal{B}(\mathcal{H}): A^*y \ \text{is an eigenvector for} \ S^*\}.$$
\end{prop}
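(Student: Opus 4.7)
The plan is to reduce the proposition to the characterizations already established for rank one operators by observing that $T$ itself must be a rank one operator. From the similarity $AT=(x\otimes y)A$ we get $T=A^{-1}(x\otimes y)A$, and a direct computation shows that for any $z\in\mathcal{H}$,
$$Tz = A^{-1}\bigl(\langle Az,y\rangle x\bigr) = \langle z,A^*y\rangle\, A^{-1}x = (A^{-1}x\otimes A^*y)z,$$
so $T = A^{-1}x\otimes A^*y$. This explicit rank one representation is the crux of the proof; the rest is a direct application of the preceding results.

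Given this identification, the three-way equivalence for $\mathcal{D}_T$ follows by applying Theorem \ref{t2.1} to $T$ with $A^{-1}x$ in place of $x$ and $A^*y$ in place of $y$: the statement transports verbatim and yields the claim, namely that $S\in \mathcal{D}_T$ iff $(A^{-1}x\otimes A^*y)$ majorizes $(A^{-1}x\otimes S^*A^*y)$ iff there exists $M>0$ with $|\langle z,S^*A^*y\rangle|\leq M|\langle z,A^*y\rangle|$ for all $z\in\mathcal{H}$.

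For the spectral radius algebra I would pass through the similarity via Proposition \ref{p0}: $S\in\mathcal{B}_T$ if and only if $ASA^{-1}\in\mathcal{B}_{x\otimes y}$. The characterization of $\mathcal{B}_{x\otimes y}$ recalled in the excerpt then says this is equivalent to $y$ being an eigenvector of $(ASA^{-1})^* = (A^*)^{-1}S^*A^*$. Writing $(A^*)^{-1}S^*A^*y = \lambda y$ and applying $A^*$ to both sides yields $S^*(A^*y) = \lambda(A^*y)$, i.e., $A^*y$ is an eigenvector of $S^*$; the converse follows by reversing these steps, giving the claimed description of $\mathcal{B}_T$.

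The only minor subtlety I anticipate is that the quoted form of $\mathcal{B}_{x\otimes y}$ was stated for unit vectors, whereas $A^{-1}x$ and $A^*y$ need not be normalized here. However, scaling $x$ or $y$ by a nonzero scalar only rescales $x\otimes y$ (and hence each $R_m$) by a constant and reparameterizes the index $m$, so it does not affect the spectral radius algebra; thus one may pass to unit vectors without loss of generality. Beyond this, the argument is essentially bookkeeping on top of Proposition \ref{p0} and Theorem \ref{t2.1}.
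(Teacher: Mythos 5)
Your proposal is correct and follows essentially the same route as the paper: the crux in both is the identification $T=A^{-1}(x\otimes y)A=(A^{-1}x\otimes A^*y)$, after which the Deddens part is Theorem \ref{t2.1} applied to this rank one operator and the spectral radius part is the characterization \eqref{e1} of $\mathcal{B}_{x\otimes y}$ (the paper invokes it directly for $A^{-1}x\otimes A^*y$, while you reach the same point through Proposition \ref{p0}). Your extra remark on the unit-vector normalization in \eqref{e1} is a detail the paper silently skips, and it is handled correctly.
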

\begin{proof}
Since $T$ is similar to $x\otimes y$ by $A$, then $AT=(x\otimes y)A$ and so
$$T=A^{-1}(x\otimes y)A=(A^{-1}x\otimes A^*y).$$
 Hence by Theorem 2.8 of \cite{lp} and Theorem \ref{t2.1} we get the proof.

\end{proof}
In the next lemma we get that if a bounded linear operator is majorized by a rank one operator is rank one.
\begin{lem}\label{p2.3}
Let $x,y\in \mathcal{H}$ and $T\in \mathcal{B}(\mathcal{H})$. If $x\otimes y$ majorizes $T$, then $T$ is a rank one operator and therefore there exists $h\in \mathcal{H}$ such that $T=h\otimes y$.
\end{lem}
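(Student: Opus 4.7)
The plan is to exploit the rigidity imposed by the majorization: the operator $x\otimes y$ has one-dimensional range and vanishes on $\{y\}^{\perp}$, and the inequality $\|Tz\|\leq M\|(x\otimes y)z\|$ will inherit both features, forcing $T$ to factor in exactly the desired way.

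First I would dispose of the trivial cases. If $y=0$ (equivalently $x\otimes y=0$), then the majorization gives $Tz=0$ for all $z$, so $T=0=0\otimes y$. The same holds if $x=0$. Therefore I may assume $y\neq 0$.

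Next comes the key observation: the hypothesis gives $M>0$ with
\begin{equation*}
\|Tz\|\leq M\|(x\otimes y)z\|=M\|x\|\,|\langle z,y\rangle|,\qquad z\in\mathcal{H}.
\end{equation*}
In particular, whenever $z\in\{y\}^{\perp}$ we have $Tz=0$, so $T$ vanishes on $\{y\}^{\perp}$. Then I would write an arbitrary $z\in\mathcal{H}$ via the orthogonal decomposition $z=\frac{\langle z,y\rangle}{\|y\|^{2}}\,y+z'$ with $z'\in\{y\}^{\perp}$, apply $T$ and use linearity together with $Tz'=0$ to obtain
\begin{equation*}
Tz=\frac{\langle z,y\rangle}{\|y\|^{2}}\,Ty=\langle z,y\rangle\, h,
\end{equation*}
where $h:=Ty/\|y\|^{2}\in\mathcal{H}$. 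This is precisely the identity $T=h\otimes y$, which shows that $T$ has rank at most one, completing the proof.

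I do not expect a genuine obstacle here; the argument is short and structural. The only subtlety worth flagging is to handle the degenerate case $y=0$ separately before dividing by $\|y\|^{2}$, and to note that the factor $\|x\|$ on the right-hand side of the majorization is harmless (it can be absorbed into the constant $M$, and it is only the qualitative vanishing on $\{y\}^{\perp}$ that drives the argument).
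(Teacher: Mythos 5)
Your proof is correct, but it follows a genuinely different route from the paper. The paper invokes Douglas's majorization--range-inclusion theorem (Theorem~\ref{t0}): since $x\otimes y$ majorizes $T$, one gets $\mathcal{R}(T^{*})\subseteq\mathcal{R}(y\otimes x)=\{\alpha y:\alpha\in\Complex\}$, hence $T^{*}=y\otimes h$ and $T=h\otimes y$. You instead argue directly from the inequality $\|Tz\|\leq M\|x\|\,|\langle z,y\rangle|$: it forces $T$ to vanish on $\{y\}^{\perp}$, and the orthogonal decomposition of $z$ along $y$ then yields $T=(Ty/\|y\|^{2})\otimes y$. Your argument is more elementary and self-contained (it needs no external factorization theorem and handles the degenerate case $y=0$ explicitly, which the paper glosses over), while the paper's approach is a one-line application of machinery it has already set up and fits the majorization theme running through the section. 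One cosmetic remark: as you note, your argument shows $T$ has rank \emph{at most} one (e.g.\ $T$ could be $0$); the paper's statement has the same imprecision, so this is not a defect of your proof.
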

\begin{proof}
If $x\otimes y$ majorizes $T$, then by Theorem \ref{t0}, we have
 $$\mathcal{R}(T^*)\subseteq \mathcal{R}(y\otimes x)=\{\alpha y: \alpha\in \mathbb{C}\}.$$

 Hence $T^*$ is a rank one operator and so there exists $h\in \mathcal{H}$ such that $T^*=y\otimes h$. Consequently $T=h\otimes y$. This completes the proof.
\end{proof}
In the next Theorem we characterize Deddense and spectral radius algebras of operators majorized by rank one operators.
\begin{thm}\label{t2.2}
Let $x,y\in \mathcal{H}$ and $T\in \mathcal{B}(\mathcal{H})$. Then if $x\otimes y$ majorizes $T$, then  $S\in \mathcal{D}_{T}$  if and only if $h\otimes y$  majorizes $(h\otimes S^*y)$, for some $h\in \mathcal{H}$ if and only if there exists $M>0$ such that $|\langle z, S^*y\rangle|\leq M |\langle z, y\rangle|$, for all $z\in \mathcal{H}$. Also,
 $$\mathcal{B}_{T}=\{S\in \mathcal{B}(\mathcal{H}): y \ \text{is an eigenvector for} \ S^*\}=\mathcal{B}_{x\otimes y}.$$
\end{thm}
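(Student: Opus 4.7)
The plan is to reduce everything to the rank one case already treated earlier in the paper. Since $x\otimes y$ majorizes $T$, Lemma \ref{p2.3} immediately forces $T$ to have rank one, so we may write $T=h\otimes y$ for some $h\in\mathcal{H}$. After this reduction, both the Deddense and spectral radius assertions become direct consequences of the rank one results already obtained.

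For the Deddense part I would simply apply Theorem \ref{t2.1} to $h\otimes y$ in place of $x\otimes y$. That theorem yields at once $S\in\mathcal{D}_T=\mathcal{D}_{h\otimes y}$ if and only if $h\otimes y$ majorizes $h\otimes S^*y$ if and only if there exists $M>0$ with $|\langle z,S^*y\rangle|\le M|\langle z,y\rangle|$ for every $z\in\mathcal{H}$, which is exactly the chain of equivalences stated.

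For the spectral radius algebra I would invoke the Lambert--Petrovic characterization recalled just before Proposition \ref{p2.2}: for unit vectors $u,v$, $\mathcal{B}_{u\otimes v}$ consists of those $S\in\mathcal{B}(\mathcal{H})$ such that $v$ is an eigenvector of $S^*$. This condition depends only on the second tensor factor; the first factor plays no role. After the mild observation that $\mathcal{B}_{\alpha A}=\mathcal{B}_A$ for every nonzero scalar $\alpha$ (which follows from the identity $R_m(\alpha A)^2=R_{m|\alpha|}(A)^2$ together with a reindexing of the supremum), rescaling $h$ and $y$ to unit vectors gives $\mathcal{B}_{h\otimes y}=\{S\in\mathcal{B}(\mathcal{H}): y\text{ is an eigenvector for }S^*\}$. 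Applying the same reasoning to $x\otimes y$ produces the identical set, and therefore $\mathcal{B}_T=\mathcal{B}_{x\otimes y}$.

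The main, though mild, obstacle is the degenerate case in which $h=0$ (equivalently $T=0$) or $y=0$; then $\mathcal{D}_T=\mathcal{B}_T=\mathcal{B}(\mathcal{H})$, and one has to check that the majorization and eigenvalue conditions in the statement hold trivially so as to remain consistent with these formulas. Apart from this edge check, no substantive calculation is required: the entire argument is a two-line reduction via Lemma \ref{p2.3} followed by a direct appeal to Theorem \ref{t2.1} and to the cited characterization of $\mathcal{B}_{x\otimes y}$.
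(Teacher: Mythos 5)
Your proposal is correct and follows essentially the same route as the paper: apply Lemma \ref{p2.3} to write $T=h\otimes y$, then invoke Theorem \ref{t2.1} for the Deddense part and the Lambert--Petrovic description (\ref{e1}) for the spectral radius part. Your extra care about rescaling to unit vectors and the degenerate case $h=0$ or $y=0$ is a welcome refinement that the paper's proof silently skips, but it does not change the argument.
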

\begin{proof}
Since $x\otimes y$ majorizes $T$, then by the Lemma \ref{p2.3}, there exists $h\in \mathcal{H}$ such that $T=h\otimes y$. Therefore by Theorem \ref{t2.1} we get that
$S\in \mathcal{D}_{T}$  if and only if $h\otimes y$  majorizes $(h\otimes S^*y)$, for some $h\in \mathcal{H}$ if and only if there exists $M>0$ such that $|\langle z, S^*y\rangle|\leq M |\langle z, y\rangle|$, for all $z\in \mathcal{H}$. Also, by \ref{e1} we have
 $$\mathcal{B}_{T}=\{S\in \mathcal{B}(\mathcal{H}): y \ \text{is an eigenvector for} \ S^*\}=\mathcal{B}_{x\otimes y}.$$
\end{proof}
Now we consider the quasi isometry operators on the Hilbert space $\mathcal{H}$ and characterize Deddens and spectral radius algebras of them. The operator $T\in \mathcal{B}(\mathcal{H})$ is called quasi-isometry if $T^*(T^*T)T=T^*T$.
\begin{thm}\label{t2.6}
Let $S,T\in \mathcal{B}(\mathcal{H})$. If $T$ is quasi-isometry, then $S\in \mathcal{D}_T$ if and only if $T$ majorizes $TS$. Moreover, if $r(T)<1$, then $\mathcal{B}_T=\mathcal{B}(\mathcal{H})$. Also, for the case $r(T)\geq 1$, $S\in \mathcal{B}_T$ if and only if there exists $M>0$ such that
$$\|Sx\|+\|TSx\|\alpha_m\leq M(\|x\|+\|Tx\|\alpha_m), \forall m\in \mathbb{N}, \ \ \forall x\in \mathcal{H},$$
where $\alpha_m=\sum^{\infty}_{n=1}d^{2n}_m$.
\end{thm}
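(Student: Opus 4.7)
The crux of the proof is a single identity: for a quasi-isometry $T$, one has $T^{*n}T^n = T^*T$ (equivalently $\|T^n x\| = \|Tx\|$) for every $n \geq 1$ and every $x \in \mathcal{H}$. I would derive this by induction. The base case $n=2$ is exactly the defining relation, which yields $\|T^2 y\|^2 = \langle T^{*2}T^2 y, y\rangle = \|Ty\|^2$. Applying the base case with $y = T^{n-1}x$ gives $\|T^{n+1} x\| = \|T^n x\|$, and chaining produces $\|T^n x\| = \|Tx\|$ for all $n \geq 1$; polarization then upgrades this to the operator identity $T^{*n}T^n = T^*T$.

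Part (a) is then immediate. Since $\|T^n Sx\| = \|TSx\|$ and $\|T^n x\| = \|Tx\|$ for every $n \geq 1$, the family of inequalities defining $S \in \mathcal{D}_T$ collapses to the single condition $\|TSx\| \leq M\|Tx\|$, which is precisely the statement that $T$ majorizes $TS$. The $n=0$ instance $\|Sx\| \leq M\|x\|$ is automatic after replacing $M$ with $\max\{M, \|S\|\}$.

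For (c), I would invoke the sum-of-norms formulation of the $\mathcal{B}_T$-condition given in the preliminaries. The identity reduces both sides to closed-form geometric series:
\[
\sum_{n=0}^{\infty} d_m^{2n}\|T^n Sx\| = \|Sx\| + \alpha_m\|TSx\|, \qquad \sum_{n=0}^{\infty} d_m^{2n}\|T^n x\| = \|x\| + \alpha_m\|Tx\|,
\]
so $S \in \mathcal{B}_T$ iff $\|Sx\| + \alpha_m\|TSx\| \leq M(\|x\| + \alpha_m\|Tx\|)$ for all $m \in \mathbb{N}$ and $x \in \mathcal{H}$, which is exactly (c).

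The main obstacle I anticipate lies in the $r(T)<1$ case of (b): for any \emph{nonzero} quasi-isometry one has $\|T^n\| = \|T\|$ for $n \geq 1$, so Gelfand's formula forces $r(T) = 1$, and the hypothesis $r(T)<1$ can be realized only by $T=0$. In that degenerate case $R_m = I$ for every $m$, so $R_m S R_m^{-1} = S$ is uniformly bounded in $m$ for every $S \in \mathcal{B}(\mathcal{H})$, giving $\mathcal{B}_T = \mathcal{B}(\mathcal{H})$. Recognizing that $T=0$ is in fact the only situation to which (b) applies (and therefore that (c) is the substantive half of the $\mathcal{B}_T$ description) is the conceptual point that needs to be kept in mind when reading the statement.
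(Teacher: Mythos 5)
Your proposal is correct, and for the two substantive parts (the Deddens characterization and the $r(T)\geq 1$ description of $\mathcal{B}_T$) it follows exactly the paper's route: establish $T^{*n}T^n=T^*T$, hence $\|T^nx\|=\|Tx\|$ for $n\geq 1$, and collapse the defining families of inequalities to a single one, resp.\ to $\|Sx\|+\alpha_m\|TSx\|\leq M(\|x\|+\alpha_m\|Tx\|)$. Where you genuinely depart from the paper is the $r(T)<1$ case. The paper argues that $\alpha_m=\infty$ there and concludes ``consequently $\mathcal{B}_T=\mathcal{B}(\mathcal{H})$''; taken at face value this inference is not valid (divergence of both sides proves nothing when $Tx=0$ but $TSx\neq 0$, and indeed $r(T)<1$ alone does not force $\mathcal{B}_T=\mathcal{B}(\mathcal{H})$: a nonzero nilpotent $x\otimes y$ has $r=0$ yet a proper spectral radius algebra by the paper's own equation (2.2)). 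Your observation that $\|T^n\|=\|T\|$ for every $n\geq 1$ forces $r(T)=1$ for any nonzero quasi-isometry, so that $r(T)<1$ occurs only for $T=0$ where $R_m=I$ and the claim is trivial, is the clean way to close this gap; it also exposes that the theorem's trichotomy really reduces to $T=0$ versus $T\neq 0$ with $r(T)=1$. This is a sharper and more honest treatment than the one in the paper.
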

\begin{proof}
If $T$ is quasi-isometry, $T^*(T^*T)T=T^*T$, then for every $n\in \mathbb{N}$, $T^{*^n}T^n=T^*T$. This implies that for every $n\in \mathbb{N}$ and $x\in \mathcal{H}$,
$$\|T^nx\|^2=\langle T^nx, T^nx\rangle=\langle T^{*^n}T^nx,x\rangle=\langle T^*Tx,x\rangle=\langle Tx,Tx\rangle=\|Tx\|^2.$$
Let $S\in \mathcal{B}(\mathcal{H})$. Then $\|T^nSx\|=\|TSx\|$, $\|T^nx\|=\|Tx\|$, for each $x\in \mathcal{H}$, $n\in \mathbb{N}$, and so $S\in \mathcal{D}_T$ if and only if there exists $M>0$ such that
$$\|T^nSx\|\leq M\|T^nx\|, \ \ \forall n\in \mathbb{N}, \ \ x\in \mathcal{H}$$
if and only if there exists $M>0$ such that
$$\|TSx\|\leq M\|Tx\|, \ \ \ \ \ \text{for all} \ x\in X.$$
This implies that $S\in \mathcal{D}_T$ if and only if $T$ majorizes $TS$.\\
By our assumptions we have $T^{*^n}T^n=T^*T$, for every $n\in \mathbb{N}$. So $S\in \mathcal{B}_T$ if and only if there exists $M>0$ such that

$$\sum^{\infty}_{n=0}d^{2n}_m\|T^nSx\|\leq M\sum^{\infty}_{n=0}d^{2n}_m\|T^nx\|, \forall m\in \mathbb{N}, \ \ \forall x\in \mathcal{H}$$
if and only if
$$\sum^{\infty}_{n=0}d^{2n}_m\|TSx\|\leq M\sum^{\infty}_{n=0}d^{2n}_m\|Tx\|, \forall m\in \mathbb{N}, \ \ \forall x\in \mathcal{H}$$
if and only if
$$\|Sx\|+\|TSx\|\sum^{\infty}_{n=1}d^{2n}_m\leq M(\|x\|+\|Tx\|\sum^{\infty}_{n=1}d^{2n}_m), \forall m\in \mathbb{N}, \ \ \forall x\in \mathcal{H}.$$
By definition, $\{d_m\}$ is an increasing sequence convergent to $\frac{1}{r(T)}$, indeed
 $$\sup_{m\in \mathbb{N}}d_m=\frac{1}{r(T)}.$$
  Since $T$ is a bounded operator, then $r(T)<\infty$. Hence the series $\alpha_m=\sum^{\infty}_{n=1}d^{2n}_m$ is convergent for all $m\in \mathbb{N}$ if and only if $r(T)\geq1$, otherwise it is divergent. Hence for the case $r(T)\leq1$,
$S\in \mathcal{B}_T$ if and only if there exists $M>0$ such that
$$\|Sx\|+\|TSx\|\alpha_m\leq M(\|x\|+\|Tx\|\alpha_m), \forall m\in \mathbb{N}, \ \ \forall x\in \mathcal{H}.$$
Also, for the case $r(T)<1$, $\sum^{\infty}_{n=1}d^{2n}_m=\infty$ and consequently $\mathcal{B}_T=\mathcal{B}(\mathcal{H})$.
\end{proof}

\section{ \sc\bf Applications to WCT operators}
Let $(X, \mathcal{F}, \mu)$ be a complete $\sigma$-finite measure space. All statements regarding sets and functions are to be interpreted as holding true except for sets of measure zero.

For a $\sigma$-subalgebra $\mathcal{A}$ of $\mathcal{F}$, the conditional expectation operator associated with $\mathcal{A}$ is the mapping $f \rightarrow E^{\mathcal{A}}f$, defined for all non-negative $f$ as well as for all $f \in L^2(\mathcal{F}) = L^2(X, \mathcal{F}, \mu)$. Here, $E^{\mathcal{A}}f$ is the unique $\mathcal{A}$-measurable function that satisfies the equation:

$$\int_{A}(E^{\mathcal{A}}f)d\mu = \int_{A}fd\mu \ \ \ \ \ \ \ \forall A\in \mathcal{A}.$$

We will often use the notation $E$ instead of $E^{\mathcal{A}}$. This operator will play a significant role in our
 work, and we list some of its useful properties here:
%

\vspace*{0.2cm} \noindent $\bullet$ \  If $g$ is
$\mathcal{A}$-measurable, then $E(fg)=E(f)g$.

\noindent $\bullet$ \ If $f\geq 0$, then $E(f)\geq 0$; if $E(|f|)=0$,
then $f=0$.

\noindent $\bullet$ \ $|E(fg)|\leq
(E(|f|^2))^{\frac{1}{2}}(E(|g|^{2}))^{\frac{1}{2}}$.

\noindent $\bullet$ \ For each $f\geq 0$, $z(E(f))$ is the smallest $\mathcal{A}$-set containing $z(f)$, where $z(f)=\{x\in X: f(x)\neq 0\}$.

\vspace*{0.2cm}\noindent A detailed discussion and verification of
most of these properties may be found in \cite{rao}.

\begin{defn}
Let $(X,\mathcal{F},\mu)$ be a $\sigma$-finite measure space and $\mathcal{A}$ be a
$\sigma$-sub-algebra of $\mathcal{F}$ such that $(X,\mathcal{A},\mu_{\mathcal{A}})$ is also $\sigma$-finite. Let $E$ be the conditional
expectation operator relative to $\mathcal{A}$. If $u,w:X\rightarrow \mathbb{C}$ are $\mathcal{F}$-measurable functions such that $uf$ is conditionable (i.e., $E(uf)$ exists) and $wE(uf)\in L^{2}(\mathcal{F})$ for all $f\in L^{2}(\mathcal{F})$, then the corresponding weighted conditional type (or briefly WCT) operator is the linear transformation $M_wEM_u:L^2(\mathcal{F})\rightarrow L^{2}(\mathcal{F})$ defined by $f\rightarrow wE(uf)$.
\end{defn}
As it was proved in \cite{ej}, the WCT operator $M_wEM_u$ on $L^2(\mathcal{F})$ is bounded if and only if $(E(|u|^{2}))^{\frac{1}{2}}(E(|w|^2))^{\frac{1}{2}}\in
L^{\infty}(\mathcal{A})$.

Now un the next theorem we characterize Deddens algebra of WCT operator $T=M_{\bar{u}}EM_u$.

\begin{thm}\label{t3.22}
Let $T=M_{\bar{u}}EM_u$ and $S\in \mathcal{B}(L^2(\mathcal{F})$. Then $S\in \mathcal{D}_T$ if and only if $PSP=PS$ and $XP=PSP\in \mathcal{D}_{M_{E(|u|^2)}}$, in which $P=P_{\mathcal{N}(EM_u)^{\perp}}$.
\end{thm}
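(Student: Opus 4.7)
The plan is to reduce the Deddense condition for $T = M_{\bar{u}}EM_u$ to the corresponding condition for the $\mathcal{A}$-measurable multiplication operator $M_{E(|u|^2)}$, by exploiting the fact that $T$ coincides with $M_{E(|u|^2)}$ on the invariant subspace $PL^2(\mathcal{F}) = \mathcal{N}(EM_u)^{\perp}$.

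First I would compute the powers of $T$. Iterating $T$ and using the pull-out identity $E(|u|^2\cdot E(h)) = E(|u|^2)\cdot E(h)$, an induction yields
$$T^n f = \bar{u}\, E(|u|^2)^{n-1}\, E(uf), \qquad n \geq 1.$$
Hence $\mathcal{N}(EM_u) \subseteq \mathcal{N}(T^n)$ and $\mathcal{R}(T^n) \subseteq \overline{\mathcal{R}(M_{\bar{u}}E)} = PL^2(\mathcal{F})$, so $T^n = PT^n P$. The key structural identity is then
$$T\big|_{PL^2(\mathcal{F})} = M_{E(|u|^2)}\big|_{PL^2(\mathcal{F})},$$
which follows from the one-line calculation $T(\bar{u}g) = \bar{u}E(|u|^2)g = E(|u|^2)\bar{u}g$ on the dense generating set $\{\bar{u}g : g \in L^2(\mathcal{A})\}$ of $PL^2(\mathcal{F})$, using $\mathcal{A}$-measurability. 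I would also observe that $M_{E(|u|^2)}$ commutes with $P$, since its symbol is $\mathcal{A}$-measurable, so it preserves the orthogonal splitting $L^2(\mathcal{F}) = PL^2 \oplus (I-P)L^2$.

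For the forward implication, I would test $\|T^n Sx\| \leq M\|T^n x\|$ on each summand. On $x \in (I-P)L^2$ the right side vanishes, forcing $Sx \in \mathcal{N}(T) = (I-P)L^2$ and hence $PS(I-P) = 0$, i.e.\ $PS = PSP$. On $x \in PL^2$, the equalities $T^n Sx = T^n PSx = T^n (PSP)x$ (from $T^n(I-P)=0$ and the first condition), combined with the identification on $PL^2(\mathcal{F})$, turn the Deddense inequality into
$$\|M_{E(|u|^2)}^n (PSP)x\| \leq M\,\|M_{E(|u|^2)}^n x\|, \qquad x \in PL^2,\ n \geq 1.$$
Since $PSP$ annihilates $(I-P)L^2$ and $M_{E(|u|^2)}$ commutes with $P$, this bound propagates orthogonally to every $x \in L^2(\mathcal{F})$, yielding $PSP \in \mathcal{D}_{M_{E(|u|^2)}}$. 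The reverse implication runs the same chain of equalities backwards.

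The main obstacle I anticipate is verifying the structural identity $T|_{PL^2} = M_{E(|u|^2)}|_{PL^2}$; everything else is routine linear algebra along the orthogonal splitting. Once this identification and the commutation $[M_{E(|u|^2)}, P] = 0$ are in hand, the Deddense condition for $T$ decouples cleanly into the invariance condition $PS = PSP$ and a Deddense condition for $PSP$ against a multiplication operator.
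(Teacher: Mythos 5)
Your proof is correct and follows essentially the same route as the paper: decompose $L^2(\mathcal{F})$ along $P=P_{\mathcal{N}(EM_u)^{\perp}}$, identify $T$ with $M_{E(|u|^2)}$ on $PL^2(\mathcal{F})$ via $T^nP=M_{(E(|u|^2))^n}P$, and reduce the Deddense inequality blockwise. The only minor (and arguably cleaner, since it is self-contained) difference is that you obtain the necessity of $PS=PSP$ directly by testing the Deddense inequality on vectors in $\mathcal{N}(EM_u)$, whereas the paper deduces it from the inclusion $\mathcal{D}_T\subseteq\mathcal{B}_T$ together with a cited characterization of $\mathcal{B}_T$ for WCT operators.
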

\begin{proof}
We consider the Hilbert space $L^2(\mathcal{F})$ as a direct sum $\mathcal{H}_1\oplus \mathcal{H}_2$, in which
 $$\mathcal{H}_2=\mathcal{N}(EM_u)=\{f\in L^2(\mathcal{F}): E(uf)=0\}$$
and
 $$\mathcal{H}_1=\mathcal{H}^{\perp}_2=\overline{\bar{u}L^2(\mathcal{A})}.$$

Easily we get that $\mathcal{N}(EM_u)=\mathcal{N}(M_{\bar{u}}EM_u)$, because
$$\langle M_{\bar{u}}EM_uf,f\rangle=\|EM_uf\|^2.$$
Since $T=M_{\bar{u}}EM_u$ is bounded, then $E(|u|^2)\in L^{\infty}(\mathcal{A})$ and so $E(uf)\in L^2(\mathcal{A})$, for all $f\in L^2(\mathcal{A})$. This implies that $TP=M_{E(|u|^2)}P$ and consequently $T^nP=M_{(E(|u|^2))^n}P$, for every $n\in \mathbb{N}$. Thus the corresponding block matrix of $T=M_{\bar{u}}EM_u$ is
\begin{center}
$ T^n=\left(
 \begin{array}{cc}
     M_{(E(|u|^2))^n} & 0 \\
     0 & 0 \\
   \end{array}
 \right)
$
and also for any $S\in \mathcal{B}(\mathcal{H})$,
$ S=\left(
   \begin{array}{cc}
     X & Y \\
     Z & W \\
   \end{array}
 \right)
$.
\end{center}
Hence for every $f\in L^2(\mathcal{F})$, we have $f=Pf+(P^{\perp}f)=\left(
   \begin{array}{cc}
     Pf \\
     P^{\perp}f \\
   \end{array}
 \right)$, in which $P=P_{\mathcal{H}_1}$ and $P^{\perp}=I-P$. So we have
\begin{center}
$ T^nSf=
 \left(
   \begin{array}{cc}
      M_{(E(|u|^2))^n}XPf+M_{(E(|u|^2))^n}YP^{\perp}f \\
     0 \\
   \end{array}
 \right)
$.
\end{center}
Therefore $S\in \mathcal{D}_T$ if and only if there exists $M>0$ such that

$$\|M_{(E(|u|^2))^n}XPf+M_{(E(|u|^2))^n}YP^{\perp}f\|\leq M\|M_{(E(|u|^2))^n}f\|, \ \ \ \forall n\in \mathbb{N}, \ \ f\in L^2(\mathcal{F}.$$

By Theorem 2.4 of \cite{ej1} we have $S\in \mathcal{B}_T$ if and only if $\mathcal{N}(EM_u)$ is invariant under $S$, and so $S\in \mathcal{B}_T$ if and only if $P^{\perp}SP^{\perp}=SP^{\perp}$ if and only if $PSP=PS$. Moreover, we know that $\mathcal{D}_T\subseteq \mathcal{B}_T$. This means that if $S\in \mathcal{D}_T$, then it has to have the property $PSP=PS$. So we have $Y=0$. Also,
$$XP=PSP^2=PSP=P^2SP=PX=PXP.$$
 Hence $S\in \mathcal{D}_T$ if and only if there exists $M>0$ such that

$$\|M_{(E(|u|^2))^n}PSPf\|\leq M\|M_{(E(|u|^2))^n}f\|, \ \ \ \forall n\in \mathbb{N}, \ \ f\in L^2(\mathcal{F}.$$

 By all these observations we get that $S\in \mathcal{D}_T$ if and only if $PSP=PS$ and $XP=PSP\in \mathcal{D}_{M_{E(|u|^2)}}$.
\end{proof}
Here we have the next corollary.
\begin{cor}\label{cor3.1}
Let $T=M_{a\bar{u}}EM_u\in \mathcal{B}(L^2(\mathcal{F})$, $a$ be an $\mathcal{A}$-measurable function and $S\in \mathcal{B}(L^2(\mathcal{F})$. Then $S\in \mathcal{D}_T$ if and only if $PSP=PS$ and $XP=PSP\in \mathcal{D}_{M_{aE(|u|^2)}}$, in which $P=P_{\mathcal{N}(EM_u)^{\perp}}$.
\end{cor}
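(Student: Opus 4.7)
The plan is to mirror the argument of Theorem~\ref{t3.22}, with the multiplier $E(|u|^2)$ replaced throughout by $aE(|u|^2)$. The key observation is that since $a$ is $\mathcal{A}$-measurable it pulls through the conditional expectation: for any $\mathcal{A}$-measurable $g$ we have $T(\bar{u}g)=a\bar{u}E(u\bar{u}g)=aE(|u|^2)\cdot\bar{u}g$. Consequently $\mathcal{H}_1=\overline{\bar{u}L^2(\mathcal{A})}$ is $T$-invariant, $TP=M_{aE(|u|^2)}P$, and by induction $T^n P=M_{(aE(|u|^2))^n}P$. Since $\mathcal{H}_2=\mathcal{N}(EM_u)\subseteq\mathcal{N}(T)$ is immediate, $T^n$ admits exactly the same block-diagonal form as in Theorem~\ref{t3.22}, but with $M_{(aE(|u|^2))^n}$ in the $(1,1)$ entry and zeros elsewhere.

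Next I would verify that $aE(|u|^2)\in L^\infty(\mathcal{A})$, so that $M_{aE(|u|^2)}$ is a well-defined bounded operator on $L^2(\mathcal{F})$. This follows from the boundedness criterion for WCT operators applied to $T=M_{a\bar{u}}EM_u$: since $a$ is $\mathcal{A}$-measurable, $(E(|a\bar{u}|^2))^{1/2}(E(|u|^2))^{1/2}=|a|E(|u|^2)\in L^\infty(\mathcal{A})$. After this verification, the argument proceeds exactly as in Theorem~\ref{t3.22}: writing $S$ in block form with entries $X,Y,Z,W$ relative to $\mathcal{H}_1\oplus\mathcal{H}_2$, the inclusion $\mathcal{D}_T\subseteq\mathcal{B}_T$ together with the fact that $\mathcal{N}(EM_u)$ is $S$-invariant for all $S\in\mathcal{B}_T$ (still available via Theorem~2.4 of \cite{ej1}, whose proof only uses that $T$ vanishes on $\mathcal{H}_2$ and preserves $\mathcal{H}_1$, both of which persist in our setting) forces $Y=0$, $PSP=PS$, and $XP=PSP$.

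The final step is the direct translation of the defining inequality: $\|T^n Sf\|\leq M\|T^n f\|$ reduces, via the block form of $T^n$, to $\|M_{(aE(|u|^2))^n}PSPf\|\leq M\|M_{(aE(|u|^2))^n}f\|$ for all $n\in\mathbb{N}$ and $f\in L^2(\mathcal{F})$, which is precisely $PSP\in\mathcal{D}_{M_{aE(|u|^2)}}$. The converse follows by reversing these identities. There is no genuine obstacle beyond the original theorem: the only point requiring attention is the use of the $\mathcal{A}$-measurability of $a$ to produce the multiplication form $T^n P=M_{(aE(|u|^2))^n}P$, and once this is in place the proof is a direct repetition of the argument for Theorem~\ref{t3.22} with the scalar $E(|u|^2)$ replaced by $aE(|u|^2)$.
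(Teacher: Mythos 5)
Your proposal is correct and follows essentially the same route the paper intends: the corollary is obtained by rerunning the proof of Theorem~\ref{t3.22} verbatim with $E(|u|^2)$ replaced by $aE(|u|^2)$, the $\mathcal{A}$-measurability of $a$ being exactly what guarantees $T^nP=M_{(aE(|u|^2))^n}P$ and the boundedness of $M_{aE(|u|^2)}$. Your explicit verification of these two points is the only content the paper leaves implicit.
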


Let $S=S(E(|u|^2))$, $S_0=S(E(u))$, $G=S(E(|w|^2))$, $G_0=S(w)$, $F=S(E(uw))$. By the conditional type H$\ddot{o}$lder inequality we get that
$F\subseteq S\cap G$, $S(wE(uf))\subseteq S\cap G$, for all $f\in L^2(\mu)$, and also by the elementary properties of the conditional expectation $E$ we have $S_0\subseteq S$ and $G_0\subseteq G$.\\

For WCT operator $T=M_wEM_u$, as a bounded linear operator on the Hilbert space $L^2(\mu)$, we have $T^*=M_{\bar{u}}EM_{\bar{w}}$ and the following properties hold \cite{ej},

$$T^*T=M_{\bar{u}E(|w|^2)}EM_u, \ \ \ \ \ \ \ TT^*=M_{wE(|u|^2)}EM_{\bar{w}},$$
 $$TT^*T=M_{E(|u|^2)E(|w|^2)}M_{w}EM_{u}=M_{E(|u|^2)E(|w|^2)}T, \ \ \ \ \ \ \ T^*TT=M_{E(wu)E(|w|^2)}M_{\bar{u}}EM_u.$$

\begin{prop}\label{p3.1}
The WCT operator $T=M_wEM_u:L^2(\mu)\rightarrow L^2(\mu)$ is quasinormal if and only if $T(f)=M_{v\bar{u}}EM_u$,
 in which $v=\frac{E(uw)}{E(|u|^2)}\chi_{S\cap G}$.
 \end{prop}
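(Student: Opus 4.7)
The plan is to exploit the explicit formulas for $TT^*T$ and $T^*TT$ displayed just before the statement, namely
$$TT^*T = M_{w\,E(|u|^2)E(|w|^2)}\,E\,M_u \quad\text{and}\quad T^*TT = M_{\bar u\,E(uw)E(|w|^2)}\,E\,M_u.$$
Quasinormality of $T$, meaning $TT^*T = T^*TT$, is thus equivalent to
$$\bigl[w\,E(|u|^2) - \bar u\,E(uw)\bigr]\,E(|w|^2)\,E(uf) = 0$$
for every $f \in L^2(\mu)$, and the task reduces to converting this operator-level identity into a pointwise condition on $w$ that matches the claimed form $w = v\bar u$ on $S \cap G$.

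The next step is to carry out this conversion by a careful support analysis. By the conditional Cauchy--Schwarz inequality listed in the preliminaries, $E(uf)$ is supported in $S = S(E(|u|^2))$, so the displayed identity is automatic off $S$. Choosing $f = \bar u$ makes $E(uf) = E(|u|^2)$, which is strictly nonzero on $S$, and hence forces
$$\bigl[w\,E(|u|^2) - \bar u\,E(uw)\bigr]\,E(|w|^2) = 0 \quad \text{on } S.$$
I would then split $S$ into $S \cap G$ and $S \setminus G$: on $S\cap G$ both $E(|u|^2)$ and $E(|w|^2)$ are nonzero, so dividing yields $w = \bar u\, E(uw)/E(|u|^2) = v\bar u$ with $v$ as defined; on $S \setminus G$, the preliminaries' implication $E(|w|^2) = 0 \Rightarrow w = 0$ gives $w = 0 = v\bar u$ directly.

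Finally, I would translate this pointwise conclusion back to operator form. Since $Tf = w\,E(uf)$ and $M_{v\bar u}EM_u f = v\bar u\,E(uf)$, and since $E(uf)$ vanishes off $S$ while $v\bar u$ vanishes off $S \cap G$ by the cutoff $\chi_{S\cap G}$, the two operators agree everywhere off $S \cap G$ automatically, and agree on $S \cap G$ precisely when $w = v\bar u$ there. The converse direction is then immediate: substituting $w = v\bar u$ on $S \cap G$ into the two displayed formulas makes them coincide. The main subtlety is the bookkeeping on the complement of $S \cap G$; once that is dispatched using the support containment $S(E(uf)) \subseteq S$ and the implication $E(|w|^2) = 0 \Rightarrow w = 0$, the remainder is an elementary algebraic manipulation on $S \cap G$.
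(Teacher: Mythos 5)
Your overall strategy is the same as the paper's: write out $TT^*T$ and $T^*TT$, reduce quasinormality to the pointwise identity $\bigl[wE(|u|^2)-\bar u E(uw)\bigr]E(|w|^2)E(uf)=0$ for all $f\in L^2(\mu)$, test against a function built from $\bar u$ to kill the factor $E(uf)$, and then divide on $S\cap G$. Your support bookkeeping off $S\cap G$ (using $S(E(uf))\subseteq S$ and the implication $E(|w|^2)=0\Rightarrow w=0$) and your explicit treatment of the converse are fine, and in fact more complete than what the paper writes down.

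There is, however, one genuine gap: the step ``choosing $f=\bar u$'' is not legitimate, because $\bar u$ need not belong to $L^2(\mu)$. The standing hypothesis is only that $T$ is bounded, i.e. $E(|u|^2)E(|w|^2)\in L^\infty(\mathcal{A})$; this places no integrability constraint on $u$ alone (take $X=\mathbb{R}$ with Lebesgue measure and $u\equiv 1$). Even truncating to $\bar u\chi_{A_n}$ with $\mu(A_n)<\infty$ does not suffice, since $\int_{A_n}|u|^2\,d\mu=\int_{A_n}E(|u|^2)\,d\mu$ can still be infinite when $E(|u|^2)$ is unbounded. The paper's proof repairs exactly this point: using $\sigma$-finiteness of $(X,\mathcal{A},\mu_{\mathcal{A}})$ it takes $f_n=\bar u\sqrt{E(|w|^2)}\,\chi_{A_n}$, for which
\begin{equation*}
\|f_n\|_2^2=\int_X E(|u|^2)E(|w|^2)\chi_{A_n}\,d\mu\leq \|E(|u|^2)E(|w|^2)\|_\infty\,\mu(A_n)<\infty ,
\end{equation*}
so the boundedness hypothesis is what makes the test functions admissible; one then gets $E(uf_n)=E(|u|^2)\sqrt{E(|w|^2)}\chi_{A_n}$ and lets $n\to\infty$. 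The extra factor $\sqrt{E(|w|^2)}$ is harmless for your argument, since you only need the resulting identity on $S\cap G$, where $E(|w|^2)\neq 0$. With this substitution your proof goes through; without it, the key step fails whenever $\bar u\notin L^2(\mu)$.
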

 \begin{proof}
The WCT operator $T=M_wEM_u$ is quasinormal if and only if $TT^*T=T^*TT$ if and only if
$$E(|u|^2)E(|w|^2)wE(uf)=E(wu)E(|w|^2)\bar{u}E(uf), \ \ \ \ \text{for all} \ \ \ f\in L^2(\mu).$$
Since $T$ is bounded, then $E(|u|^2)E(|w|^2)\in L^{\infty}(\mathcal{A})$ and $\|T\|=\|(E(|u|^2))^{\frac{1}{2}}(E(|w|^2)^{\frac{1}{2}}\|_{\infty}$, \cite{ej}.
 By the fact that $(X,\mathcal{A}, \mu_{\mathcal{A}})$ is a $\sigma$-finite measure space, we have an increasing sequence  $\{A_n\}_{n\in \mathbb{N}}\subseteq \mathcal{A}$, with $0<\mu(A_n)<\infty$ and $X=\cup_{n\in \mathbb{N}}A_n$. Now we set $f_n=\bar{u}\sqrt{E(|w|^2)}\chi_{A_n}$, for every $n\in \mathbb{N}$. So
 \begin{align*}
 \|f_n\|^2_{2}&=\int_X|u|^2E(|w|^2)\chi_{A_n}d\mu\\
 &=\int_XE(|u|^2)E(|w|^2)\chi_{A_n}d\mu\\
 &\leq \|E(|u|^2)E(|w|^2)\|^2_{\infty}\mu(A_n)\\
 &<\infty,
\end{align*}
and hence $f_n\in L^2(\mu)$, for all $n\in \mathbb{N}$.\\

Suppose that $T=M_wEM_u$ is quasinormal, then by the above observations we have
\begin{align*}
E(|u|^2)E(|w|^2)wE(|u|^2)\sqrt{E(|w|^2)}\chi_{A_n}&=E(|u|^2)E(|w|^2)wE(uf_n)\\
&=E(wu)E(|w|^2)\bar{u}E(uf_n)\\
&=E(wu)E(|w|^2)\bar{u}E(|u|^2)\sqrt{E(|w|^2)}\chi_{A_n}
\end{align*}
Moreover, by taking limit we get
\begin{align*}
E(|u|^2)E(|w|^2)wE(|u|^2)\sqrt{E(|w|^2)}&=\lim _{n\rightarrow \infty}E(|u|^2)E(|w|^2)wE(|u|^2)\sqrt{E(|w|^2)}\chi_{A_n}\\
&=\lim _{n\rightarrow \infty}E(wu)E(|w|^2)\bar{u}E(|u|^2)\sqrt{E(|w|^2)}\chi_{A_n}\\
&=E(wu)E(|w|^2)\bar{u}E(|u|^2)\sqrt{E(|w|^2)}.
\end{align*}
So we have
$$E(|u|^2)E(|w|^2)wE(|u|^2)\sqrt{E(|w|^2)}=E(wu)E(|w|^2)\bar{u}E(|u|^2)\sqrt{E(|w|^2)}.$$
Therefore
$$wE(|u|^2)\chi_{S\cap G}=E(uw)\bar{u}\chi_{S\cap G},$$
and hence $w=\frac{E(uw)}{E(|u|^2)}\bar{u}\chi_{S\cap G}$.
 Consequently we have
 $$T(f)=wE(uf)=w\chi_{S\cap G} E(uf)=\frac{E(uw)}{E(|u|^2)}\chi_{S\cap G}\bar{u}E(uf)=M_{v\bar{u}}EM_u,$$
 in which $v=\frac{E(uw)}{E(|u|^2)}\chi_{S\cap G}$.
 \end{proof}
 In the next theorem we characterize Deddens algebra of quasinormal WCT operators.
 \begin{thm}
 Let WCT operator $T=M_wEM_u:L^2(\mu)\rightarrow L^2(\mu)$ be quasinormal and $S\in \mathcal{B}(L^2(\mathcal{F})$. Then $S\in \mathcal{D}_T$ if and only if $PSP=PS$ and $XP=PSP\in \mathcal{D}_{M_{vE(|u|^2)}}$, in which $P=P_{\mathcal{N}(EM_u)^{\perp}}$ and $v=\frac{E(uw)}{E(|u|^2)}\chi_{S\cap G}$..
 \end{thm}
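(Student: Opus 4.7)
The proof is essentially a direct assembly of the two immediately preceding results, so the plan is short. My strategy is to rewrite the quasinormal $T$ in the canonical form to which Corollary \ref{cor3.1} applies, verify its hypotheses, and read off the conclusion.

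First, I would invoke Proposition \ref{p3.1}: since $T = M_w E M_u$ is quasinormal, it admits the representation $T = M_{v\bar{u}} E M_u$ with $v = \frac{E(uw)}{E(|u|^2)}\chi_{S\cap G}$. The one thing to check here is that $v$ is $\mathcal{A}$-measurable. This is immediate because $E(uw)$ and $E(|u|^2)$ are $\mathcal{A}$-measurable by definition of the conditional expectation, the quotient is well-defined on $S = S(E(|u|^2))$, and both $S$ and $G = S(E(|w|^2))$ are $\mathcal{A}$-measurable sets (the support of an $\mathcal{A}$-measurable function is $\mathcal{A}$-measurable).

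With this in hand, I would apply Corollary \ref{cor3.1} directly, taking the role of the $\mathcal{A}$-measurable multiplier to be $v$. The corollary then asserts that $S \in \mathcal{D}_T$ if and only if $PSP = PS$ and $XP = PSP \in \mathcal{D}_{M_{vE(|u|^2)}}$, where $P = P_{\mathcal{N}(EM_u)^{\perp}}$. This is exactly the statement of the theorem.

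The only potential subtlety, and the step I would spend any care on, is justifying that Corollary \ref{cor3.1} genuinely applies: one needs $M_{v\bar{u}} E M_u$ to still be a bounded WCT operator on $L^2(\mathcal{F})$, but this is guaranteed because it equals the original bounded $T$. There is no real obstacle; the content of the theorem sits entirely inside Proposition \ref{p3.1} and Corollary \ref{cor3.1}, and the proof is a one-line invocation once the quasinormal form of $T$ is written down.
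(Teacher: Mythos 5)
Your proposal is correct and matches the paper's proof exactly: the paper also deduces the theorem as a direct consequence of Proposition \ref{p3.1} (the quasinormal form $T=M_{v\bar u}EM_u$) followed by Corollary \ref{cor3.1} applied with $a=v$. Your added check that $v$ is $\mathcal{A}$-measurable is a small but worthwhile detail the paper leaves implicit.
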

 \begin{proof}
 It is a direct consequence of Corollary \ref{cor3.1} and Proposition \ref{p3.1}.
 \end{proof}
 Here we provide two technical lemmas for later use.
\begin{lem}\label{l1}
Let $g\in L^{\infty}( \mathcal{A})$
and let $T:L^{2}(\Sigma)\rightarrow L^{2}(\Sigma)$ be the WCT operator $T=M_wEM_u$. Then $M_gT=0$ if and only if $g=0$ on
$S(E(|w|^{2})E(|u|^{2}))=S\cap G$.\\
 \end{lem}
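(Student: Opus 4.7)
The plan is to prove both implications directly from the explicit action $M_gTf = g\,w\,E(uf)$, combined with the support identity $S\cap G = S(E(|u|^2))\cap S(E(|w|^2)) = S(E(|w|^2)E(|u|^2))$, which is immediate from the last bullet in the list of properties of $E$ in the preliminaries (the support of a conditional expectation of a non-negative function).

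For the direction ($\Leftarrow$), the paper already records that $S(wE(uf)) \subseteq S\cap G$ for every $f\in L^2(\mu)$ as a consequence of the conditional H\"older inequality. If $g = 0$ a.e.\ on $S\cap G$, then $g\cdot wE(uf)$ is a product of an $\mathcal{A}$-measurable function vanishing on $S\cap G$ with a function supported in $S\cap G$, hence is $0$ a.e., so $M_gT = 0$.

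For the harder direction ($\Rightarrow$), I would first use the $\mathcal{A}$-measurability of $|g|^2$ and $|E(uf)|^2$ to absorb $|w|^2$ into $E(|w|^2)$:
\[
\|M_gTf\|_2^2 = \int |g|^2|w|^2|E(uf)|^2\, d\mu = \int |g|^2\,E(|w|^2)\,|E(uf)|^2\, d\mu.
\]
To force $E(|u|^2)$ into the integrand, choose a sequence $\{A_n\}\subseteq\mathcal{A}$ with $A_n\uparrow X$ and $\mu(A_n)<\infty$ (available from $\sigma$-finiteness of $(X,\mathcal{A},\mu_{\mathcal{A}})$) and set $f_n = \bar{u}\chi_{A_n}$. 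Boundedness of $T$ gives $E(|u|^2)\in L^{\infty}(\mathcal{A})$, so $\|f_n\|_2^2 = \int E(|u|^2)\chi_{A_n}\,d\mu<\infty$ and $f_n\in L^2(\Sigma)$. Since $\chi_{A_n}$ is $\mathcal{A}$-measurable, $E(uf_n) = E(|u|^2)\chi_{A_n}$, and the hypothesis $M_gT=0$ together with the displayed identity yields
\[
\int |g|^2\,E(|w|^2)\,(E(|u|^2))^2\,\chi_{A_n}\, d\mu = 0
\]
for every $n$. Monotone convergence then gives $|g|^2\,E(|w|^2)\,(E(|u|^2))^2 = 0$ a.e., that is, $g = 0$ on $S(E(|w|^2)(E(|u|^2))^2) = S\cap G$, completing the argument.

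The main technical point is the choice of test functions that simultaneously isolates $E(|u|^2)$ and $E(|w|^2)$: picking $f_n = \bar{u}\chi_{A_n}$ achieves this because the $|w|^2$ factor is replaced by $E(|w|^2)$ via the $\mathcal{A}$-measurability manoeuvre, while $E(u\bar{u}) = E(|u|^2)$ appears on the other side. Once the test family is in place, everything else is routine monotone convergence plus the standard identity $S(fh)=S(f)\cap S(h)$ for non-negative $\mathcal{A}$-measurable functions.
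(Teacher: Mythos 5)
Your overall strategy differs from the paper's: the paper disposes of the lemma in one line by quoting the norm formula $\|M_gT\|^{2}=\||g|^{2}E(|w|^{2})E(|u|^{2})\|_{\infty}$ from Theorem 2.1 of \cite{ej}, whereas you reprove the nontrivial direction from scratch with a family of test functions. That is a legitimate and more self-contained route; the $(\Leftarrow)$ direction and the absorption step $\int|g|^2|w|^2|E(uf)|^2\,d\mu=\int|g|^2E(|w|^2)|E(uf)|^2\,d\mu$ (valid because $|g|^2|E(uf)|^2$ is $\mathcal{A}$-measurable) are fine.

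There is, however, one genuine flaw in the $(\Rightarrow)$ direction: you assert that boundedness of $T=M_wEM_u$ gives $E(|u|^2)\in L^{\infty}(\mathcal{A})$. It does not. The boundedness criterion recorded in the paper is $(E(|u|^{2}))^{1/2}(E(|w|^2))^{1/2}\in L^{\infty}(\mathcal{A})$, which only controls the product; $E(|u|^2)$ may be unbounded (even infinite) where $E(|w|^2)$ is small. Consequently your test functions $f_n=\bar u\chi_{A_n}$ need not lie in $L^2(\Sigma)$, and the identity $\int|g|^2E(|w|^2)(E(|u|^2))^2\chi_{A_n}\,d\mu=0$ is not justified as written. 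The repair is exactly the device the paper uses in the proof of Proposition \ref{p3.1}: take $f_n=\bar u\sqrt{E(|w|^2)}\,\chi_{A_n}$, so that $\|f_n\|_2^2=\int E(|u|^2)E(|w|^2)\chi_{A_n}\,d\mu\le\|E(|u|^2)E(|w|^2)\|_{\infty}\,\mu(A_n)<\infty$, and the same calculation gives $\int|g|^2\bigl(E(|w|^2)E(|u|^2)\bigr)^2\chi_{A_n}\,d\mu=0$, whence $g=0$ on $S\bigl(E(|w|^2)E(|u|^2)\bigr)=S\cap G$ by monotone convergence. (Alternatively, replace $A_n$ by $A_n\cap\{E(|u|^2)\le n\}$, which is still an increasing $\mathcal{A}$-measurable exhaustion of the relevant set.) With that substitution your argument is correct.
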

\begin{proof}
 By Theorem 2.1 of \cite{ej} we have
$$\|M_gT\|^{2}=\||g|^{2}E(|w|^{2})E(|u|^{2})\|_{\infty},$$.

Hence $M_gT=0$ if and only if
 $$\|M_gT\|^{2}=|g|^{2}E(|w|^{2})E(|u|^{2})=0$$
if and only if $g=0$ on
$S(E(|w|^{2})E(|u|^{2}))$.
\end{proof}
 As we now from \cite{es} the Moore-Penrose inverse of WCT operator $T=M_wEM_u$ is
  $$T^{\dagger}=M_{\frac{\chi_{S\cap G}}{E(|u|^2)E(|w|^2)}}M_{\bar{u}}EM_{\bar{w}}=M_{\frac{\chi_{S\cap G}}{E(|u|^2)E(|w|^2)}}T^*.$$

  \begin{lem}\label{l2}
   Let $T=M_wEM_u$. Then $T^{\dagger}=T^*$ if and only if  $E(|u|^2)E(|w|^2)=\chi_{S\cap G}$.
  \end{lem}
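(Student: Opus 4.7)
The plan is to reduce the equality $T^{\dagger}=T^{\ast}$ to a pointwise statement on $S\cap G$ by exploiting the explicit formula $T^{\dagger}=M_{h}T^{\ast}$ already recalled in the excerpt, with
\[
h=\frac{\chi_{S\cap G}}{E(|u|^{2})E(|w|^{2})}.
\]
Since $T^{\dagger}=T^{\ast}$ if and only if $M_{h-1}T^{\ast}=0$, the lemma is equivalent to: $M_{h-1}T^{\ast}=0$ precisely when $E(|u|^{2})E(|w|^{2})=\chi_{S\cap G}$. This is morally Lemma \ref{l1} applied to the WCT operator $T^{\ast}=M_{\bar u}EM_{\bar w}$ (with multiplier $(h-1)\in\mathcal{A}$-measurable functions), whose distinguished support is $S(E(|\bar u|^{2})E(|\bar w|^{2}))=S\cap G$.

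For the \emph{sufficiency} direction, I would first observe that $T^{\ast}f=\bar u\,E(\bar wf)$ is always supported in $S\cap G$: the factor $\bar u$ vanishes off $S$, because $E(|u|^{2})=0$ on $X\setminus S$ together with the listed property that $E(|g|)=0$ forces $g=0$ (applied to $g=u\chi_{X\setminus S}$); and $E(\bar wf)$ vanishes off $G$ by the conditional Cauchy--Schwarz inequality $|E(\bar wf)|^{2}\le E(|w|^{2})E(|f|^{2})$. Under the hypothesis $E(|u|^{2})E(|w|^{2})=\chi_{S\cap G}$, the multiplier $h$ collapses to $\chi_{S\cap G}$, so $M_{h}T^{\ast}f=\chi_{S\cap G}T^{\ast}f=T^{\ast}f$ for every $f$, giving $T^{\dagger}=T^{\ast}$.

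For the \emph{necessity} direction, I would use the testing technique of Lemma \ref{l1}. Take an exhausting sequence $\{A_n\}\subseteq\mathcal{A}$ with $0<\mu(A_n)<\infty$ and $X=\bigcup A_n$, and apply $M_{h-1}T^{\ast}=0$ to the test functions $f_n=w\chi_{A_n}\in L^{2}(\mathcal{F})$. One computes $T^{\ast}f_n=\bar u\,E(|w|^{2})\chi_{A_n}$, so the vanishing gives $(h-1)\bar u\,E(|w|^{2})\chi_{A_n}=0$ a.e.; squaring, taking conditional expectation (legal since $h-1$ is $\mathcal{A}$-measurable), and using that $E(|w|^{2})>0$ on $G$ yields $|h-1|^{2}E(|u|^{2})E(|w|^{2})\chi_{A_n}=0$. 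Letting $n\to\infty$, $h=1$ on $S\cap G$, i.e.\ $E(|u|^{2})E(|w|^{2})=1$ on $S\cap G$. Off $S\cap G$ the product is already $0$ by definition of $S$ and $G$, so it equals $\chi_{S\cap G}$.

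The main obstacle I anticipate is the subtle $L^{\infty}$ issue: $h$ (and hence $h-1$) need not belong to $L^{\infty}(\mathcal{A})$ when $E(|u|^{2})E(|w|^{2})$ is not bounded away from $0$ on $S\cap G$, so one cannot blindly invoke Lemma \ref{l1} as a black box on the operator $M_{h-1}T^{\ast}$. This is precisely what forces the pointwise, test-function-based argument sketched above rather than a direct appeal to the norm identity $\|M_{g}T\|^{2}=\||g|^{2}E(|w|^{2})E(|u|^{2})\|_{\infty}$.
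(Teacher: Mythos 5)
Your reduction is exactly the paper's: both proofs start from $T^{\dagger}=M_{h}T^{\ast}$ with $h=\chi_{S\cap G}/\bigl(E(|u|^{2})E(|w|^{2})\bigr)$ and observe that $T^{\dagger}=T^{\ast}$ is equivalent to $M_{1-h}T^{\ast}=0$. Where you diverge is the last step: the paper simply invokes Lemma \ref{l1} with $g=1-h$ to conclude $g=0$ on $S\cap G$, whereas you refuse to do so on the grounds that $1-h$ need not lie in $L^{\infty}(\mathcal{A})$ when $E(|u|^{2})E(|w|^{2})$ is not bounded away from $0$ on $S\cap G$. That objection is legitimate --- Lemma \ref{l1} is stated only for $g\in L^{\infty}(\mathcal{A})$ and its proof rests on the norm identity $\|M_{g}T\|^{2}=\||g|^{2}E(|w|^{2})E(|u|^{2})\|_{\infty}$, which presupposes $M_{g}T$ is bounded --- so your pointwise, test-function replacement is a genuine strengthening of the argument rather than a mere paraphrase. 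Your sufficiency direction (support of $T^{\ast}f$ lies in $S\cap G$, hence $M_{\chi_{S\cap G}}T^{\ast}=T^{\ast}$) is clean and correct.

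One gap to repair in the necessity direction: you test against $f_{n}=w\chi_{A_{n}}$, but membership of $f_{n}$ in $L^{2}(\mu)$ requires $\int_{A_{n}}E(|w|^{2})\,d\mu<\infty$, and boundedness of $T$ only gives $E(|u|^{2})E(|w|^{2})\in L^{\infty}(\mathcal{A})$, not local integrability of $E(|w|^{2})$ alone. This is precisely why the paper's analogous computation in Proposition \ref{p3.1} tests against $\bar{u}\sqrt{E(|w|^{2})}\chi_{A_{n}}$ instead. The fix is routine: replace $A_{n}$ by $A_{n}\cap\{E(|w|^{2})\le n\}$ (still in $\mathcal{A}$, and these sets exhaust $G$ up to a null set wherever $E(|w|^{2})<\infty$, which holds a.e.\ on $S$ where you need the conclusion), or test against $f_{n}=w\sqrt{E(|u|^{2})}\chi_{A_{n}}$, whose norm is controlled by $\|E(|u|^{2})E(|w|^{2})\|_{\infty}\mu(A_{n})$. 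Also, after squaring and conditioning you actually obtain $|h-1|^{2}E(|u|^{2})\bigl(E(|w|^{2})\bigr)^{2}\chi_{A_{n}}=0$ rather than the expression you wrote, but this still forces $h=1$ a.e.\ on $S\cap G$, so the conclusion stands.
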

  \begin{proof}
  It is obvious that
  $$T^{\dagger}=M_{\frac{\chi_{S\cap G}}{E(|u|^2)E(|w|^2)}}T^*$$
  and so $T^{\dagger}=T^*$ if and only if $(1-M_{\frac{\chi_{S\cap G}}{E(|u|^2)E(|w|^2)}}))T^*=0$.
  Therefore by the Lemma \ref{l1} we get that $T^{\dagger}=T^*$ if and only if $E(|u|^2)E(|w|^2)=1$, $\mu$, a.e., on $S\cap G$ if and only if  $E(|u|^2)E(|w|^2)=\chi_{S\cap G}$, $\mu$, a.e., on $S\cap G$.
  \end{proof}
  We recall that $T\in \mathcal{B}(\mathcal{H})$ is partial isometry if and only if $TT^*T=T$. From Theorem 3.2 of \cite{ej} we have $T=M_wEM_u$ is partial isometry if and only if $E(|u|^2)E(|w|^2)=\chi_{S\cap G}$, $\mu$, a.e., on $S\cap G$. Now we have the following corollary.
\begin{cor}
Let $T=M_wEM_u\in \mathcal{B}(\mathcal{H})$. Then $T$ is partial isometry if and only if  $T^{\dagger}=T^*$.
\end{cor}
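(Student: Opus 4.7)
The plan is to simply chain together the two characterizations already available, namely Lemma \ref{l2} and the cited Theorem 3.2 of \cite{ej}. Both results translate the conditions ``$T$ is a partial isometry'' and ``$T^{\dagger} = T^*$'' into the same pointwise identity on $S \cap G$, so the corollary follows immediately by transitivity, without any new computation.

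More concretely, I would first recall from Theorem 3.2 of \cite{ej} that $T = M_w E M_u$ is a partial isometry if and only if $E(|u|^2)E(|w|^2) = \chi_{S \cap G}$, $\mu$-a.e.\ on $S \cap G$. Then I would invoke Lemma \ref{l2} to obtain that $T^{\dagger} = T^*$ if and only if $E(|u|^2)E(|w|^2) = \chi_{S \cap G}$. Comparing the two equivalences gives the stated biconditional.

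Since Theorem 3.2 of \cite{ej} is quoted rather than reproved, the only potential subtlety is making sure the two ``on $S \cap G$'' qualifiers really are the same condition. This is fine because outside $S \cap G$ the product $E(|u|^2)E(|w|^2)$ vanishes identically (both factors being supported on their respective sets), so the identity $E(|u|^2)E(|w|^2) = \chi_{S \cap G}$ on $S \cap G$ is in fact equivalent to the same identity on all of $X$, which is exactly the form used in Lemma \ref{l2}. There is no main obstacle; the proof is essentially a one-line invocation of the two preceding results.
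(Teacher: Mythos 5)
Your proposal is correct and is exactly the argument the paper intends: the corollary is stated immediately after recalling Theorem 3.2 of \cite{ej} and Lemma \ref{l2}, and follows by chaining the two characterizations through the common condition $E(|u|^2)E(|w|^2)=\chi_{S\cap G}$. Your extra remark reconciling the ``on $S\cap G$'' qualifier with the global identity is a sensible clarification that the paper leaves implicit.
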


%
%

 Now in the next Theorem we characterize quasi-isometry WCT operators.
\begin{thm}\label{t3.5}
Let $T=M_uEM_w$. Then For each $n\in \mathbb{N}$, $T$ is $n$-quasi-isometry if and only if $T$ is 1-quasi-isometry if and only if $|E(uw)|= 1$, $\mu$, a.e.
\end{thm}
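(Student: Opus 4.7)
The idea is to compute the iterates $T^n$ and $T^{*n}T^n$ in closed form so that the $n$-quasi-isometry identity reduces to a pointwise equation in $|E(uw)|$ that is visibly the same for every $n$.

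The first step is an induction on $n$. Writing $Tf = u\,E(wf)$ and using that $E(wf)$ is $\mathcal{A}$-measurable, the pull-out identity gives $T(uh) = uE(wuh) = uE(uw)h$ for any $\mathcal{A}$-measurable $h$. Iterating,
\[
T^n f \;=\; u\,(E(uw))^{n-1}\,E(wf), \qquad T^{*n}g \;=\; \bar w\,\overline{E(uw)}^{\,n-1}\,E(\bar u g),
\]
for every $n\ge 1$. Composing these two formulas and using $E(\bar u u) = E(|u|^2)$,
\[
T^{*n}T^n f \;=\; |E(uw)|^{2(n-1)}\,\bar w\,E(|u|^2)\,E(wf) \;=\; M_{|E(uw)|^{2(n-1)}}\,T^*T\,f.
\]

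The second step is to feed this into the definition. $T$ is $n$-quasi-isometry iff $T^{*(n+1)}T^{n+1} = T^{*n}T^n$, which by the formula above is equivalent to
\[
M_{|E(uw)|^{2(n-1)}\,(|E(uw)|^2 - 1)}\,T^*T \;=\; 0.
\]
Now $T^*T = M_{\bar w E(|u|^2)}EM_w$ is itself a WCT operator, so Lemma~\ref{l1} applies: the identity $M_gT^*T = 0$ is equivalent to $g = 0$ on the support of $T^*T$, namely $S\cap G$. Applied to $g = |E(uw)|^{2(n-1)}(|E(uw)|^2-1)$, this reads as the pointwise condition $|E(uw)|^{2(n-1)}\bigl(|E(uw)|^2-1\bigr)=0$ on $S\cap G$.

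The final step is to show this pointwise condition collapses, for all $n$, to $|E(uw)|=1$ a.e.\ on the relevant support. The point is that on any subset of $S\cap G$ where $E(uw)=0$, the closed form $T^nf = u(E(uw))^{n-1}E(wf)$ already forces $T^n$ (and hence $T^{*n}T^n$) to vanish there; discarding this null contribution one has $E(uw)\neq 0$ wherever the iterate is genuinely active, so the factor $|E(uw)|^{2(n-1)}$ is nonzero and the equation reduces to $|E(uw)|^2=1$. Since this characterization is the same for every $n$, both the $n$-quasi-isometry condition and the $1$-quasi-isometry condition are equivalent to $|E(uw)|=1$ a.e.

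\textbf{Main obstacle.} The delicate point is the justification in the last step: a priori, for $n\ge 2$ the factor $|E(uw)|^{2(n-1)}$ could satisfy the equation by vanishing, making the $n$-quasi-isometry condition formally weaker than the $n=1$ version. Controlling this via Lemma~\ref{l1} and the explicit formula for $T^n$ — so that the set $\{E(uw)=0\}\cap (S\cap G)$ is already killed on both sides of the equation — is what makes the three statements genuinely equivalent.
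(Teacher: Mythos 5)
Your closed forms for $T^n$ and $T^{*n}T^n=M_{|E(uw)|^{2(n-1)}}T^*T$, and the reduction via Lemma~\ref{l1} to the pointwise condition $|E(uw)|^{2(n-1)}\bigl(|E(uw)|^2-1\bigr)=0$ a.e.\ on $S\cap G$, all match the paper's argument. The genuine gap is exactly at the point you flag as the ``main obstacle,'' and your proposed resolution does not close it. On the set $N=\{E(uw)=0\}\cap (S\cap G)$ the $n$-quasi-isometry identity for $n\ge 2$ is satisfied \emph{automatically} (both $T^{*n}T^n$ and $T^{*(n+1)}T^{n+1}$ vanish there), so it imposes no constraint on $N$ at all; ``discarding this null contribution'' therefore only yields $|E(uw)|=1$ a.e.\ on $F=S(E(uw))$, not on $S\cap G$ and not a.e.\ on $X$. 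For $n=1$ the factor $|E(uw)|^{2(n-1)}$ is identically $1$, and the condition genuinely is $|E(uw)|=1$ a.e.\ on $S\cap G$. These two conditions differ whenever $(S\cap G)\setminus F$ has positive measure, and there the chain of equivalences breaks.

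In fact the statement itself fails for $n\ge 2$, so no repair of this step is possible without an extra hypothesis. Take $X=\{1,2,3,4\}$ with uniform measure, $\mathcal{A}$ generated by $\{1,2\}$ and $\{3,4\}$, $u\equiv 1$ and $w=(1,-1,1,1)$. Then $E(|u|^2)=E(|w|^2)=1$ everywhere while $E(uw)=\chi_{\{3,4\}}$, so $S\cap G=X$ and $F=\{3,4\}$. Your formula gives $T^{*n}T^n=M_{\chi_{\{3,4\}}}T^*T$ for all $n\ge 2$, a nonzero positive operator, so $T$ is a $2$-quasi-isometry; but $T^*T\neq T^{*2}T^{2}$ (test on $\chi_{\{1\}}$), so $T$ is not a $1$-quasi-isometry, and $|E(uw)|$ is not $1$ a.e. The paper's own proof commits the same unjustified step (it asserts that $|E(uw)|=1$ a.e.\ on $F$ is equivalent to $|E(uw)|=1$ a.e.), so you have correctly located the weak point, but neither argument survives it. What your computation actually establishes is: for $n\ge 2$, $T$ is an $n$-quasi-isometry iff $|E(uw)|\in\{0,1\}$ a.e.\ on $S\cap G$, whereas $T$ is a $1$-quasi-isometry iff $|E(uw)|=1$ a.e.\ on $S\cap G$; the three conditions of the theorem coincide only when $F=S\cap G$ up to null sets (and, for the literal reading ``$|E(uw)|=1$ a.e.\ on $X$,'' when moreover $\mu(X\setminus(S\cap G))=0$).
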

\begin{proof}
Let $T=M_wEM_u$. Then for each $n\in \mathbb{N}$,

$$T^{*^n}T^n=T^{*^{n+1}}T^{n+1}$$
if and only if
$$M_{E(|w|^2)|E(uw)|^{2(n-1)}}M_{\bar{u}}EM_u=M_{E(|w|^2)|E(uw)|^{2(n)}}M_{\bar{u}}EM_u$$
if and only if
$$M_{E(|w|^2)|E(uw)|^{2(n-1)}(1-|E(uw)|^2)}M_{\bar{u}}EM_u=0.$$
Since $E(|w|^2)|E(uw)|^{2(n-1)}(1-|E(uw)|^2)$ is an $\mathcal{A}$-measurable function, then by the Lemma \ref{l1} we get that
$$M_{E(|w|^2)|E(uw)|^{2(n-1)}(1-|E(uw)|^2)}M_{\bar{u}}EM_u=0$$ if and only if $E(|w|^2)|E(uw)|^{2(n-1)}(1-|E(uw)|^2)=0$, $\mu$, a.e., on $S$ if and only if $|E(uw)|=1$, $\mu$, a.e., on $F=S(E(uw))$ if and only if $|E(uw)|=1$, $\mu$, a.e.\\
Moreover, for $n=1$, $T^{*}T=T^{*^{2}}T^{2}$ if and only if $E(|w|^2)(1-|E(uw)|^2)=0$, $\mu$, a.e., on $S$, if and only if $1-|E(uw)|^2=0$, $\mu$, a.e., on $S\cap G$. Since $F\subseteq S\cap G$, then $1-|E(uw)|^2=0$, $\mu$, a.e., on $S\cap G$ if and only if $1-|E(uw)|^2=0$, $\mu$, a.e., on $F$ if and only if $|E(uw)|=1$, $\mu$, a.e.
\end{proof}


\begin{thm}\label{t3.6}
Let $T=M_wEM_u$ be WCT operators on $L^2(\mu)$,  $|E(uw)|=1$, a.e., on $F=S(E(uw)$ and $S\in \mathcal{B}(L^2(\mu))$. Then $S\in \mathcal{D}_{T}$ if and only if $T$ majorizes $TS$. Also, $S\in \mathcal{B}_T$ if and only if there exists $M>0$ such that
$$\|Sf\|+\|TSf\|\alpha_m\leq M(\|f\|+\|Tf\|\alpha_m), \forall m\in \mathbb{N}, \ \ \forall f\in L^2(\mu),$$
where $\alpha_m=\sum^{\infty}_{n=1}d^{2n}_m$.
\end{thm}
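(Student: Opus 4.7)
The plan is to reduce Theorem \ref{t3.6} to Theorem \ref{t2.6} via Theorem \ref{t3.5}. The hypothesis on $E(uw)$ is exactly the characterization of the quasi-isometry property for WCT operators, so the statement becomes a direct corollary once this identification is made.

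First I would invoke Theorem \ref{t3.5} to observe that the hypothesis $|E(uw)|=1$ $\mu$-a.e.\ on $F=S(E(uw))$ is equivalent to $T$ being a $1$-quasi-isometry, i.e., $T^*(T^*T)T=T^*T$. This is precisely the quasi-isometry condition on $T$ used in Theorem \ref{t2.6}. With this identification, the WCT operator $T=M_wEM_u$ satisfies all the hypotheses of Theorem \ref{t2.6}, viewed as a bounded linear operator on the Hilbert space $\mathcal{H}=L^2(\mu)$.

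Next I would simply apply Theorem \ref{t2.6} to $T$. Its first conclusion yields that for any $S\in\mathcal{B}(L^2(\mu))$, we have $S\in\mathcal{D}_T$ if and only if $T$ majorizes $TS$, which is exactly the first assertion of Theorem \ref{t3.6}. Its second conclusion yields that $S\in\mathcal{B}_T$ if and only if there exists $M>0$ such that
\[
\|Sf\|+\|TSf\|\alpha_m\leq M\bigl(\|f\|+\|Tf\|\alpha_m\bigr),\qquad \forall m\in\mathbb{N},\ \forall f\in L^2(\mu),
\]
with $\alpha_m=\sum_{n=1}^{\infty}d_m^{2n}$, matching the second assertion exactly.

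There is essentially no obstacle here; the proof is a one-line citation once one recognizes that Theorem \ref{t3.5} translates the measure-theoretic condition $|E(uw)|=1$ into the abstract quasi-isometry condition. I would note in passing that if $r(T)<1$ then $\alpha_m=\infty$ for all $m$ and Theorem \ref{t2.6} gives $\mathcal{B}_T=\mathcal{B}(L^2(\mu))$ vacuously, so the stated inequality is only informative in the regime $r(T)\geq 1$; the paper's phrasing implicitly covers this case.
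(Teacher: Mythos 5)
Your proposal is correct and follows essentially the same route as the paper: the paper also reduces the statement to Theorem \ref{t2.6}, the only cosmetic difference being that it derives the key identity $\|T^nf\|=\|Tf\|$ directly from the power formula $T^nf=E(uw)^{n-1}Tf$ and the hypothesis on $|E(uw)|$, and then notes $r(T)=1$ so that the $r(T)\geq 1$ case of Theorem \ref{t2.6} applies, whereas you obtain the quasi-isometry property by citing Theorem \ref{t3.5}. Your closing remark about the $r(T)<1$ regime is the right thing to flag; for a nonzero quasi-isometry one in fact has $\|T^n\|=\|T\|$ for all $n$, hence $r(T)=1$, which is exactly the observation the paper uses.
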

\begin{proof}
We know that $T^nf=E(uw)^{n-1}Tf$, for all $n\in \mathbb{N}$ and $f\in L^2(\mu)$. If $|E(uw)|=1$, a.e., on $F=S(E(uw)$, then $\|T^nf\|=\|Tf\|$, for all $n\in \mathbb{N}$ and $f\in L^2(\mu)$. Hence for $S\in \mathcal{B}(L^2(\mu))$, we have $S\in \mathcal{D}_{T}$ if and only if $T$ majorizes $TS$. Similarly, we get that $r(T)=1$ and so by Theorem \ref{t2.6}, $S\in \mathcal{B}_T$ if and only if there exists $M>0$ such that
$$\|Sf\|+\|TSf\|\alpha_m\leq M(\|f\|+\|Tf\|\alpha_m), \forall m\in \mathbb{N}, \ \ \forall f\in L^2(\mu).$$
\end{proof}
Now by Theorems \ref{t3.5} and \ref{t3.6} we have the following corollary.
\begin{cor}
Let $T=M_wEM_u$ be WCT operators on $L^2(\mu)$ and $S\in \mathcal{B}(L^2(\mu))$. If $T$ is $1$-quasi-isometry or $n$-quasi-isometry, for every $n\in \mathbb{N}$ or there exists $n\in \mathbb{N}$ such that $T$ is $n$-quasi-isometry, then $S\in \mathcal{D}_{T}$ if and only if $T$ majorizes $TS$. And, $S\in \mathcal{B}_T$ if and only if there exists $M>0$ such that
$$\|Sx\|+\|TSx\|\alpha_m\leq M(\|x\|+\|Tx\|\alpha_m), \forall m\in \mathbb{N}, \ \ \forall x\in \mathcal{H},$$
where $\alpha_m=\sum^{\infty}_{n=1}d^{2n}_m$.
\end{cor}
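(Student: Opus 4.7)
The plan is to observe that this corollary follows directly by chaining Theorem \ref{t3.5} into Theorem \ref{t3.6}, so essentially no new computation is needed beyond verifying that each of the three listed hypotheses reduces to the same pointwise condition on $E(uw)$.

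First I would note that Theorem \ref{t3.5} establishes a strong equivalence: for the WCT operator $T = M_w E M_u$, the statement ``$T$ is $n$-quasi-isometry'' is independent of $n \in \mathbb{N}$, and all these statements are equivalent to the single pointwise condition $|E(uw)| = 1$ $\mu$-a.e.\ (equivalently, $\mu$-a.e.\ on $F = S(E(uw))$, since off this set $E(uw)$ vanishes). Consequently, the three separate hypotheses in the corollary — ``$T$ is $1$-quasi-isometry'', ``$T$ is $n$-quasi-isometry for every $n \in \mathbb{N}$'', and ``there exists $n \in \mathbb{N}$ such that $T$ is $n$-quasi-isometry'' — are all equivalent to one another, and each is equivalent to $|E(uw)| = 1$ a.e.\ on $F$.

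Having reduced all three hypotheses to the single condition required in Theorem \ref{t3.6}, I would then simply invoke that theorem: under $|E(uw)| = 1$ a.e.\ on $F$, we have $S \in \mathcal{D}_T$ if and only if $T$ majorizes $TS$, and $S \in \mathcal{B}_T$ if and only if the stated weighted inequality with $\alpha_m = \sum_{n=1}^{\infty} d_m^{2n}$ holds for some $M>0$. This gives the conclusion verbatim.

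There is no real obstacle here — the corollary is a packaging of the two preceding theorems. The only point requiring care is the phrasing of the three equivalent hypotheses in the statement, and making explicit via Theorem \ref{t3.5} that all of them coincide with the scalar condition $|E(uw)| = 1$ a.e.\ that is the genuine input to Theorem \ref{t3.6}. Once that equivalence is cited, the proof reduces to a single sentence applying Theorem \ref{t3.6}.
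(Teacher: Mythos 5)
Your proposal is correct and matches the paper's own treatment: the corollary is stated immediately after Theorems \ref{t3.5} and \ref{t3.6} as a direct combination of the two, with Theorem \ref{t3.5} collapsing all three quasi-isometry hypotheses to the single condition $|E(uw)|=1$ $\mu$-a.e.\ and Theorem \ref{t3.6} then supplying the conclusion. No further comment is needed.
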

\begin{thm}\label{t3.9}
Let $f_1,f_2\in L^2(\mu)$ and $T=M_wEM_u$. Then if $f_1\otimes f_2$ majorizes $T$, then  $S\in \mathcal{D}_{T}$  if and only if $g\otimes f_2$  majorizes $(g\otimes S^*f_2)$, for some $g\in L^2(\mu)$ if and only if there exists $M>0$ such that $\int_X h\overline{S^*(f_2)}d\mu\leq M \int_X h\overline{f_2}d\mu$, for all $h\in L^2(\mu)$. Also,
 $$\mathcal{B}_{T}=\{S\in \mathcal{B}(L^2(\mu)): h \ \text{is an eigenvector for} \ S^*\}=\mathcal{B}_{f\otimes g}.$$
\end{thm}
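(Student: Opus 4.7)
The plan is to deduce Theorem~\ref{t3.9} as a direct specialization of the general Hilbert space result Theorem~\ref{t2.2} to the Hilbert space $L^2(\mu)$. The WCT structure of $T=M_wEM_u$ will in fact play no role beyond the fact that $T$ is a bounded linear operator on $L^2(\mu)$; everything is driven by the majorization hypothesis $f_1\otimes f_2 \succeq T$.

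First I would invoke Lemma~\ref{p2.3}: since $f_1\otimes f_2$ majorizes $T$, the lemma forces $T$ to be rank one and in fact of the form $T = g\otimes f_2$ for some $g\in L^2(\mu)$. This is the only place where the majorization hypothesis is really used, and it reduces the problem from \emph{operators majorized by a rank one operator} to \emph{rank one operators} — so the WCT assumption on $T$ becomes incidental.

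Next I would apply Theorem~\ref{t2.2} with $x = g$ and $y = f_2$. This gives at once that $S\in\mathcal{D}_T$ iff $g\otimes f_2$ majorizes $g\otimes S^*f_2$, which happens iff there exists $M>0$ with $|\langle z,S^*f_2\rangle|\le M|\langle z,f_2\rangle|$ for all $z\in L^2(\mu)$; and simultaneously it gives $\mathcal{B}_T = \{S\in\mathcal{B}(L^2(\mu)) : f_2 \text{ is an eigenvector for } S^*\} = \mathcal{B}_{f_1\otimes f_2}$. The last equality is exactly the content of the identity \eqref{e1} applied with $y=f_2$.

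The only remaining step is a notational translation: in $L^2(\mu)$ the Hilbert space inner product is $\langle z,S^*f_2\rangle = \int_X z\,\overline{S^*(f_2)}\,d\mu$ and $\langle z,f_2\rangle = \int_X z\,\overline{f_2}\,d\mu$, so the abstract inequality of Theorem~\ref{t2.2} rewrites verbatim as the integral inequality stated in Theorem~\ref{t3.9}. There is no real obstacle: the only non-bookkeeping step is recognizing that Lemma~\ref{p2.3} collapses the hypothesis to the rank one case, after which Theorem~\ref{t2.2} finishes the proof immediately.
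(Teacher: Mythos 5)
Your proof is correct and follows exactly the route the paper takes: the paper's own proof of Theorem~\ref{t3.9} simply observes that the $L^2(\mu)$ inner product is $\langle f,g\rangle=\int_X f\bar g\,d\mu$ and then cites Theorem~\ref{t2.2}, which is precisely your reduction via Lemma~\ref{p2.3} to the rank one case. Your version is actually more careful, since it makes explicit the use of Lemma~\ref{p2.3} and correctly restores the absolute values and the eigenvector $f_2$ that the theorem's statement garbles.
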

\begin{proof}
As is known the inner product of the Hilbert space $L^2(\mu)$ is as:
$$\langle f, g\rangle=\int_Xf\bar{g}d\mu, \ \ \ \text{for all}, \ \ f,g \in L^2(\mu).$$
 Hence by Theorem \ref{t2.2} we get the proof.
\end{proof}

\section{ \sc\bf Declarations }
\textbf{Ethical Approval}
Not applicable.

\textbf{Competing interests}
The authors declare that there is no conflict of interest.


\textbf{Availability of data and materials}
Our manuscript has no associate data.

\end{document}